\newtheorem{theorem}{Theorem}[section]
\newtheorem{lemma}[theorem]{Lemma}
\newtheorem{corollary}[theorem]{Corollary}
\theoremstyle{definition}
\newtheorem{example}[theorem]{Example}
\newtheorem{remark}[theorem]{Remark}
\numberwithin{equation}{section}
\title{Regularity of the local fractional maximal function}
\thanks{This work is supported by the Academy of Finland.}
\author[Heikkinen, Kinnunen, Korvenp\"a\"a and Tuominen]
{Toni Heikkinen, Juha Kinnunen, Janne Korvenp\"a\"a and Heli Tuominen}
\newcommand\rn{\mathbb R^n}
\newcommand\ph{\varphi}
\newcommand\eps{\varepsilon}
\newcommand\M{\operatorname{\mathcal M}}
\newcommand\dist{\operatorname{dist}}
\providecommand{\ch}[1]{\text{\raise 2pt \hbox{$\chi$}\kern-0.2pt}_{#1}}
\providecommand{\vint}[1]{\mathchoice
          {\mathop{\vrule width 5pt height 3 pt depth -2.5pt
                  \kern -9pt \kern 1pt\intop}\nolimits_{\kern -5pt{#1}}}%
          {\mathop{\vrule width 5pt height 3 pt depth -2.6pt
                  \kern -6pt \intop}\nolimits_{\kern -3pt{#1}}}%
          {\mathop{\vrule width 5pt height 3 pt depth -2.6pt
                  \kern -6pt \intop}\nolimits_{\kern -3pt{#1}}}%
          {\mathop{\vrule width 5pt height 3 pt depth -2.6pt
                  \kern -6pt \intop}\nolimits_{\kern -3pt{#1}}}}
\begin{document}

%\thanks{The research was supported by the Academy of Finland.}

\begin{abstract}
This paper studies smoothing properties of the local fractional maximal operator, which is defined in a proper subdomain of the Euclidean space.
We prove new pointwise estimates for the weak gradient of the maximal function, which imply norm estimates in Sobolev spaces.
An unexpected feature is that these estimates contain extra terms involving spherical and fractional maximal functions.
%These extra terms are not present in the global case, when  the maximal functions are defined in the whole space.
Moreover, we construct several explicit examples which show that our results are essentially optimal. 
Extensions to metric measure spaces are also discussed.
\end{abstract}

\subjclass[2010]{42B25, 46E35}

%\date{\today}  
\maketitle

\section{Introduction}
Fractional maximal operators are standard tools in partial differential equations, potential theory and harmonic analysis. In the Euclidean setting, they have been studied in \cite{A1}, \cite{A2}, \cite{AH}, \cite{KS}, \cite{KrKu}, \cite{LMPT} and \cite{MW}. It has been observed in \cite{KS} that
the global fractional maximal operator $\M_\alpha$, defined by
\begin{equation}\label{M}
 \M_\alpha u(x)=\sup_{r>0}\,r^{\alpha}\vint{B(x,r)}|u(y)|\,dy,
\end{equation}
has similar smoothing properties as the Riesz potential. More precisely, 
there is a constant $C$, depending only on $n$ and $\alpha$, such that 
\begin{equation}\label{global pointwise estimate for DM_alpha}
|D\M_{\alpha}u(x)|\le C\M_{\alpha-1}u(x)
\end{equation}
for almost every $x\in\rn$.
This implies that the fractional maximal operator maps $L^{p}(\rn)$  to a certain Sobolev space.
If the function itself is a Sobolev function, then the fractional maximal function belongs to a Sobolev space with a higher exponent. This follows quite easily from the Sobolev theorem using the facts that $\M_{\alpha}$ is sublinear and commutes with translations, see \cite[Theorem 2.1]{KS}. 
The regularity properties of the Hardy-Littlewood maximal function, that is \eqref{M} with $\alpha=0$, have been studied in \cite{AP}, \cite{B}, \cite{HM}, \cite{HO}, \cite{K}, \cite{Ko3}, \cite{Ku}, \cite{L}, \cite{L3} and \cite{T}.

%Let $\Omega$ be an open proper subset set of $\rn$.
This paper studies smoothness of the local fractional maximal function 
\[
\M_{\alpha,\Omega} u(x)=\sup\,r^{\alpha}\vint{B(x,r)}|u(y)|\,dy,
\]
where the supremum is taken over all radii $r$ satisfying $0<r<\dist(x,\rn\setminus\Omega)$. 
In this case, the family of balls in the definition of the maximal function depends on the point $x\in\Omega$ and the same arguments as in the global case do not apply. For the Hardy-Littlewood maximal function, 
the question has been studied in \cite{KL} and  \cite{HO}, see also \cite{L2}. 
For the local Hardy-Littlewood maximal operator $\M_{\Omega}$ with $\alpha=0$ we have
\begin{equation}\label{M1}
|D \M_{\Omega}u(x)|\le 2\M_{\Omega}|Du|(x)
\end{equation}
for almost every $x\in\Omega$.
In particular, this implies that the maximal function is bounded in Sobolev space $W^{1,p}(\Omega)$ when $1<p\le\infty$.

The situation is more delicate for the local fractional  maximal operator $\M_{\alpha,\Omega}$ with $\alpha>0$. 
One might expect that a pointwise estimates \eqref{global pointwise estimate for DM_alpha} and \eqref{M1} would also hold in that case.
However,  this is not true as such. 
Instead of \eqref{global pointwise estimate for DM_alpha}, we have
\[
|D \M_{\alpha,\Omega}u(x)|
\le C\big(\M_{\alpha-1,\Omega}u(x)+ \mathcal S _{\alpha-1,\Omega}u(x)\big)
\] 
for almost every $x\in\Omega$, where $C$ depends only on $n$. 
The local spherical fractional maximal function is defined as
\[
\mathcal S_{\alpha-1,\Omega}u(x) 
= \sup r^{\alpha-1} \vint{\partial B(x,r)}|u(y)|\,d\mathcal H^{n-1}(y),
\]
where the supremum is taken over all radii $r$ for which $0<r<\operatorname{dist}(x,\rn\setminus\Omega)$.
Norm estimates for the spherical maximal operator are much more delicate than the corresponding estimates for the standard maximal operator, but they can be obtained along the lines of \cite{S1} and \cite{SS}. 
These estimates are of independent interest and they are discussed in Section 2.
Consequently, the local fractional maximal function belongs locally to a certain Sobolev space. 

We also show that 
\[
|D\M_{\alpha,\Omega}u(x)|\le 2 \M_{\alpha,\Omega}|Du|(x) + \alpha \M_{\alpha-1,\Omega} u(x)
\]
for almost every $x\in\Omega$. This is an extension of \eqref{M1}, but again there is and extra term on the right hand side. Because of this the local fractional maximal function of a Sobolev function is not necessarily smoother than the fractional maximal function of an arbitrary function in $L^{p}(\Omega)$. This is in a strict contrast with the smoothing properties in the global case discussed in \cite{KS}.
Moreover, we show that $\M_{\alpha,\Omega}u$ has zero boundary values in the Sobolev sense and hence it can be potentially used as a test function in the theory of partial differential equations.
In Section \ref{sec: examples}, we construct several explicit examples, which complement our study and show that our results are essentially optimal. Another delicate feature is that the local fractional maximal operator over cubes
has worse smoothing properties than $\M_{\alpha,\Omega}$ defined over balls.

In the last section, we extend the regularity results of the local fractional maximal operator in metric measure spaces. As in the non-fractional case \cite{AK}, we use a discrete version of the maximal operator, because the standard maximal operators do not have the required regularity properties without any additional assumptions on the metric and measure. In the metric setting, fractional maximal operators have been studied for example in \cite{GGKK}, \cite{Go}, \cite{Go2}, \cite{HKNT}, \cite{HT}, \cite{PW}, \cite{SWZ} and \cite{W}.

\section{Notation and preliminaries}\label{sec: preliminaries}

Throughout the paper, the characteristic function of a set $E$ is denoted by $\ch{E}$.  
In general, $C$ is a positive constant whose value is not necessarily the same at each occurrence.  

Let $\Omega\subset\rn$ be an open set such that $\rn\setminus\Omega\ne\emptyset$ and let $\alpha\ge 0$. 
The local fractional maximal function of a locally integrable function $u$ is 
\[
\M_{\alpha,\Omega} u(x)=\sup\,r^{\alpha}\vint{B(x,r)}|u(y)|\,dy,
\]
where the supremum is taken over all radii $r$ satisfying $0<r<\dist(x,\rn\setminus\Omega)$.
Here 
\[
 \vint{B}u(y)\,dy=\frac1{|B|}\int_Bu(y)\,dy
\]
denotes the integral average of $u$ over $B$. If $\alpha=0$, we have the local Hardy-Littlewood maximal function 
\[
\M_{\Omega} u(x)=\sup\vint{B(x,r)}|u(y)|\,dy. 
\]
When $\Omega=\rn$, the supremum is taken over all $r>0$ and we obtain the fractional maximal function 
$\M_{\alpha}u$ and the Hardy-Littlewood maximal function $\M u$. A Sobolev type theorem for the fractional maximal operator follows easily from the Hardy-Littlewood maximal function theorem.

\begin{theorem}\label{fracM bounded rn}
Let $p>1$ and $0<\alpha<n/p$. There is a constant $C>0$, independent of $u$,
such that
 \[
 \|\M_{\alpha}u\|_{L^{p^*}(\rn)}
 \le C\|u\|_{L^p(\rn)},
 \]
for every $u\in L^{p}(\rn)$ with $p^*=np/(n-\alpha p)$. 
\end{theorem}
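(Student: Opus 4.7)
The plan is to prove this by the classical Hedberg-type interpolation, reducing the Sobolev bound directly to the Hardy--Littlewood maximal inequality. The central pointwise estimate I would establish first is
\[
\M_\alpha u(x) \le C\, (\M u(x))^{1-\alpha p/n}\, \|u\|_{L^p(\rn)}^{\alpha p/n}
\]
for every $x\in\rn$. To obtain this, I would fix $x$ and compare two upper bounds on $r^\alpha \vint{B(x,r)}|u|\,dy$: on the one hand it is at most $r^\alpha \M u(x)$ by definition of the Hardy--Littlewood maximal function; on the other hand, H\"older's inequality yields $\vint{B(x,r)}|u|\,dy \le c\, r^{-n/p}\|u\|_{L^p(\rn)}$, so that $r^\alpha \vint{B(x,r)}|u|\,dy \le c\, r^{\alpha-n/p}\|u\|_{L^p(\rn)}$. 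The first bound is increasing in $r$ while the second is decreasing (since $\alpha < n/p$), so balancing them at $r = (c\|u\|_{L^p(\rn)}/\M u(x))^{p/n}$ produces the displayed estimate after taking the supremum over $r$.

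Next, I would raise this pointwise estimate to the power $p^*$ and integrate over $\rn$. The crucial algebraic identity is that $(1-\alpha p/n)\, p^* = p$, which follows directly from $p^* = np/(n-\alpha p)$; the remaining power of $\|u\|_{L^p(\rn)}$ works out to $p^*-p$ by the same relation. Thus
\[
\int_{\rn} (\M_\alpha u)^{p^*}\,dx \le C\, \|u\|_{L^p(\rn)}^{p^*-p} \int_{\rn} (\M u)^p\,dx.
\]
Since $p>1$, the Hardy--Littlewood maximal theorem gives $\|\M u\|_{L^p(\rn)} \le C\|u\|_{L^p(\rn)}$, and taking $p^*$-th roots yields the claim.

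The only step requiring any thought is the Hedberg interpolation at the beginning; everything else is bookkeeping and a standard appeal to the Hardy--Littlewood maximal theorem. It is worth noting that the argument relies crucially on taking the supremum over \emph{all} $r>0$ and on the translation-invariance hidden in the $L^p$ norm estimate for $\vint{B(x,r)}|u|\,dy$; this is precisely why the same strategy does not transfer directly to the local operator $\M_{\alpha,\Omega}$ considered in the subsequent sections.
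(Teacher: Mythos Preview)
Your argument is correct and is precisely the standard Hedberg interpolation that the paper has in mind: the paper does not write out a proof but only remarks that the result ``follows easily from the Hardy--Littlewood maximal function theorem,'' which is exactly what your pointwise bound $\M_\alpha u \le C(\M u)^{1-\alpha p/n}\|u\|_{L^p}^{\alpha p/n}$ together with the $L^p$ boundedness of $\M$ accomplishes.
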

Now the corresponding boundedness result for the local fractional maximal function follows easily because for each $u\in L^{p}(\Omega)$, $p>1$, we have 
 \begin{equation}\label{bdd in Lp rn}
 \|\M_{\alpha,\Omega}u\|_{L^{p^*}(\Omega)}
 \le\|\M_{\alpha}(u\ch{\Omega})\|_{L^{p^*}(\rn)}
  \le C\|u\ch{\Omega}\|_{L^p(\rn)}
 = C\|u\|_{L^p(\Omega)}.
 \end{equation}
 
The local spherical fractional maximal function of $u$ is
\[
\mathcal S_{\alpha,\Omega}u(x) 
= \sup r^\alpha \vint{\partial B(x,r)}|u(y)|\,d\mathcal H^{n-1}(y),
\]
where the supremum is taken over all radii $r$ for which $0<r<\operatorname{dist}(x,\rn\setminus\Omega)$.
Observe that the barred integral denotes the integral average with respect to the Hausdorff measure $ \mathcal H^{n-1}$. 
When $\Omega=\rn$, the supremum is taken over all $r>0$ and we obtain the global spherical fractional maximal function 
$\mathcal S_{\alpha}u$.

The following norm estimate for the spherical fractional maximal operator will be useful for us.

\begin{theorem}\label{spherical}
Let $n\ge 2$, $p> n/(n-1)$ and $0\le \alpha<\min\{(n-1)/p,\ n-2n/((n-1)p)\}.$ Then
\begin{equation}
\|\mathcal S_\alpha u\|_{L^{p^*}(\rn)}\le C\|u\|_{L^p(\rn)},
\end{equation}
where $p^*=np/(n-\alpha p)$ and the constant $C$ depends only on $n$, $p$ and $\alpha$.
\end{theorem}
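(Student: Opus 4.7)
The argument builds on the classical spherical maximal theorem: for $n\ge 2$ and $q>n/(n-1)$, the non-fractional spherical maximal operator $\mathcal S_0$ is bounded on $L^q(\rn)$ (Stein for $n\ge 3$ and Bourgain for $n=2$). The fractional $L^p\to L^{p^*}$ bound is deduced from this by a Hedberg-type interpolation, in the spirit of the classical passage from the Hardy--Littlewood maximal theorem to the Sobolev theorem for $\M_\alpha$ already used in Theorem~\ref{fracM bounded rn}.

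\medskip

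For each scale $R>0$ I would split
\[
\mathcal S_\alpha u(x)\le R^\alpha\,\mathcal S_0 u(x)+\sup_{r>R}r^\alpha\vint_{\partial B(x,r)}|u|\,d\mathcal H^{n-1}(y),
\]
and estimate the large-radius tail by applying H\"older's inequality on each sphere at an auxiliary exponent $s\in(1,p)$. Since the first hypothesis $\alpha<(n-1)/p$ permits a choice of $s$ for which $\alpha-(n-1)/s<0$, this produces a decaying prefactor $R^{\alpha-(n-1)/s}$, together with a residual spherical $L^s$-integral of $|u|^s$ that is dominated by the auxiliary maximal function $\mathcal S_0(|u|^s)(x)^{1/s}$. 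Optimizing $R$ then yields a pointwise bound of Hedberg type
\[
\mathcal S_\alpha u(x)\le C\,\mathcal S_0 u(x)^{\lambda}\,\mathcal S_0(|u|^s)(x)^{\mu},
\]
with $\lambda,\mu$ determined by $\alpha$, $s$ and $n$. Taking the $L^{p^*}$-norm in $x$ and applying H\"older's inequality reduces the proof to two applications of Stein's theorem, at the exponents $p$ (for $\mathcal S_0 u$) and $p/s$ (for $\mathcal S_0(|u|^s)$). Both fall above Stein's threshold $n/(n-1)$: the first because $p>n/(n-1)$ by assumption, and the second precisely when $\alpha<n-2n/((n-1)p)$, which is the role of the second hypothesis.

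\medskip

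\textbf{Main obstacle.} The principal technical difficulty, absent in the classical Hedberg argument for $\M_\alpha$, is that the spherical integral $\int_{\partial B(x,r)}|u|^s\,d\mathcal H^{n-1}$ cannot be controlled pointwise by $\|u\|_{L^s(\rn)}^s$; mass can concentrate on a single sphere. This forces us to replace the expected $\|u\|_{L^p}$-term in the Hedberg estimate by the genuinely $x$-dependent quantity $\mathcal S_0(|u|^s)(x)^{1/s}$ and to invoke Stein's theorem a second time at a different exponent. Matching the two applications of Stein's theorem through the Hedberg optimization is what gives rise to the two seemingly independent conditions on $\alpha$ in the hypothesis, and tracking the exponents sharply — especially carving out the correct range for $s$ — is the delicate part of the argument.
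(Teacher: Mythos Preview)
There is a genuine gap in your Hedberg-type argument. After applying H\"older's inequality on the sphere $\partial B(x,r)$ at exponent $s$, you obtain
\[
r^\alpha\vint{\partial B(x,r)}|u|\,d\mathcal H^{n-1}\le r^\alpha\Big(\vint{\partial B(x,r)}|u|^s\,d\mathcal H^{n-1}\Big)^{1/s}.
\]
If you now bound the right-hand side by $r^\alpha\,\mathcal S_0(|u|^s)(x)^{1/s}$, the prefactor is $r^\alpha$, not $r^{\alpha-(n-1)/s}$; there is no decay as $r\to\infty$ and the optimization in $R$ collapses. The only way to manufacture the exponent $\alpha-(n-1)/s$ is to keep the \emph{non-averaged} integral $\big(\int_{\partial B(x,r)}|u|^s\,d\mathcal H^{n-1}\big)^{1/s}$ as the residual, but that quantity is not dominated by $\mathcal S_0(|u|^s)(x)^{1/s}$: converting the integral to an average re-inserts exactly the factor $r^{(n-1)/s}$ you tried to extract. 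In the classical Hedberg scheme the large-radius decay comes from the uniform bound $\|u\|_{L^s(B(x,r))}\le\|u\|_{L^s(\rn)}$, and spheres admit no analogue of this trace estimate; replacing $\|u\|_{L^s}$ by the $x$-dependent quantity $\mathcal S_0(|u|^s)^{1/s}$ does not repair the problem, it merely restates it. Consequently the pointwise inequality $\mathcal S_\alpha u\le C(\mathcal S_0 u)^\lambda(\mathcal S_0(|u|^s))^{\mu}$ that you need is not available, and the reduction to two applications of Stein's theorem does not go through.

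The paper does not give a self-contained proof of this theorem. It cites the case $\alpha=0$ to Stein and Bourgain, and the case $\alpha>0$ to Schlag \cite{S1} ($n=2$) and Schlag--Sogge \cite{SS} ($n\ge 3$), together with a Littlewood--Paley argument to pass from the dyadic-scale operator $\widetilde{\mathcal S}$ to the full $\mathcal S_\alpha$. Those results rest on local smoothing estimates for the wave equation and oscillatory integral techniques; they are genuinely deeper than the $L^p\to L^p$ boundedness of $\mathcal S_0$ and are not known to follow from it by any soft interpolation. The two constraints on $\alpha$ in the hypothesis encode the boundary of the Schlag--Sogge region in the $(1/p,1/q)$ plane, not thresholds arising from two applications of Stein's theorem.
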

For $\alpha=0$, this was proved by Stein \cite{St} in the case $n\ge 3$ and by Bourgain \cite{Bo} in the case $n=2$. 
For $\alpha>0$, the result is due to Schlag \cite[Theorem 1.3]{S1} when $n=2$ and Schlag and Sogge \cite[Theorem 4.1]{SS} when $n\ge 3$. In \cite{S1} and \cite{SS} the result is stated for the operator
\[
\widetilde{\mathcal S} u(x)=\sup_{1<r<2}\,\vint{\partial B(x,r)}|u(y)|\,d\mathcal H^{n-1}(y),
\]
but the corresponding result for $\mathcal S_\alpha$ follows by the Littlewood-Paley theory as in 
  \cite[p.71--73]{Bo} , \cite[Section 2.4]{So} and \cite[Section 3.1]{S2}.
In particular, Theorem \ref{spherical} implies that the local spherical fractional maximal operator satisfies
\begin{equation}\label{local spherical}
\|\mathcal S_{\alpha,\Omega} u\|_{L^{p^*}(\Omega)}\le C\|u\|_{L^p(\Omega)}.
\end{equation}

\section{Derivative of the local fractional maximal function}\label{sec:rn}

In this section, we prove pointwise estimates for the gradient of the local fractional
maximal function. By integrating the pointwise estimates we also get the
corresponding norm estimates. 

We define the fractional average functions
$u_t^\alpha\colon\Omega\to[-\infty,\infty]$, $0<t<1$, $0 \leq \alpha < \infty$,
of a locally integrable function $u$ as
\begin{equation} \label{uta}
u_t^\alpha(x)
= (t\delta(x))^\alpha \vint{B(x,t\delta(x))}u(y)\,dy,
\end{equation}
where $\delta(x)=\operatorname{dist}(x, \rn\setminus\Omega)$.
We start by deriving an estimate for the gradient of the fractional average function
of an $L^p$-function.

\begin{lemma} \label{Duta lemma}
Let $p>n/(n-1)$, $0<t<1$ and
$1\leq\alpha < \min\{(n-1)/p,\ n-2n/((n-1)p)\}+1$.
If $u\in L^p(\Omega)$, then $|Du_t^\alpha| \in L^q(\Omega)$ with $q=np/(n-(\alpha-1) p)$. 
Moreover,
\begin{align} \label{Duta}
|Du_t^\alpha(x)|
\le C \big(\M_{\alpha-1,\Omega}u(x)+\mathcal S_{\alpha-1,\Omega}u(x) \big)
\end{align}
for almost every $x\in\Omega$, where the constant $C$ depends only on $n$.
\end{lemma}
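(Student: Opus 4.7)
The plan is to obtain \eqref{Duta} from an explicit identity for $\nabla u_t^\alpha$ and then to derive the $L^q$-bound from Theorems \ref{fracM bounded rn} and \ref{spherical}. I would first reduce to the case $u\in C^\infty(\Omega)\cap L^p(\Omega)$ by density; once the pointwise inequality and the $L^q$-control are available, the general case follows by a routine approximation, since the right-hand side of \eqref{Duta} is an $L^q$-continuous functional of $u$.

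For smooth $u$, write $r(x)=t\delta(x)$ and
\[
 u_t^\alpha(x)=v(x,r(x)),\qquad v(x,r)=\frac{r^{\alpha-n}}{\omega_n}\int_{B(x,r)}u(y)\,dy.
\]
By the chain rule, $\nabla u_t^\alpha(x)=(\nabla_x v)(x,r(x))+(\partial_r v)(x,r(x))\,\nabla r(x)$, and both partials can be rewritten using only averages of $u$, not of its gradient. For the spatial part, translation invariance of Lebesgue measure together with the divergence theorem gives
\[
\nabla_x\int_{B(x,r)}u\,dy=\int_{B(x,r)}\nabla u\,dy=\int_{\partial B(x,r)}u(y)\,\frac{y-x}{r}\,d\mathcal{H}^{n-1}(y),
\]
so that $(\nabla_x v)(x,r)$ is a spherical average of $u$ times $r^{\alpha-1}$. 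For the radial part, differentiating the ball integral in $r$ produces a spherical integral, and a short calculation gives
\[
\partial_r v(x,r)=(\alpha-n)\,r^{\alpha-1}\vint{B(x,r)}u\,dy+n\,r^{\alpha-1}\vint{\partial B(x,r)}u\,d\mathcal{H}^{n-1}.
\]
Combining these and using $|\nabla r(x)|\le t\le 1$ (the distance function is $1$-Lipschitz) yields a bound of the form
\[
|\nabla u_t^\alpha(x)|\le C(n,\alpha)\,r(x)^{\alpha-1}\!\left(\vint{B(x,r(x))}|u|\,dy+\vint{\partial B(x,r(x))}|u|\,d\mathcal{H}^{n-1}\right),
\]
and because $r(x)<\delta(x)$ the right-hand side is dominated by $\M_{\alpha-1,\Omega}u(x)+\mathcal{S}_{\alpha-1,\Omega}u(x)$, with a constant depending only on $n$ (since the admissible $\alpha$ lies in a fixed range).

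The $L^q$-statement is then immediate: under the hypotheses, $\alpha-1<(n-1)/p<n/p$, so Theorem \ref{fracM bounded rn} applied to $u\chi_\Omega$ (as in \eqref{bdd in Lp rn}) gives $\M_{\alpha-1,\Omega}u\in L^q(\Omega)$; and the range hypothesis on $\alpha$ is precisely what is needed to apply Theorem \ref{spherical} (with $\alpha$ replaced by $\alpha-1$), which together with \eqref{local spherical} yields $\mathcal{S}_{\alpha-1,\Omega}u\in L^q(\Omega)$. The main subtlety is the handling of the moving domain in the derivative: it is exactly the radial contribution $\partial_r v\cdot\nabla r$ that forces the spherical maximal term into the estimate, which is why this extra term cannot be avoided. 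Everything else is bookkeeping.
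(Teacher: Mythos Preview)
Your proposal is correct and follows essentially the same approach as the paper: reduce to smooth $u$, compute $\nabla u_t^\alpha$ via the chain rule applied to $v(x,r)=r^{\alpha-n}\omega_n^{-1}\int_{B(x,r)}u$, convert the volume integral of $\nabla u$ to a spherical integral via the divergence theorem, estimate using $|\nabla\delta|\le 1$, and then pass from smooth to general $u\in L^p(\Omega)$ by approximation and weak compactness of the gradients in $L^q$. The paper carries out the approximation step in slightly more detail (explicitly invoking weak $L^q$-convergence of the gradients and the preservation of a.e.\ inequalities under weak limits), but your remark that this is routine given the $L^q$-continuity of the right-hand side is accurate.
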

\begin{proof}
Suppose first that $u\in L^p(\Omega) \cap C^\infty(\Omega)$.
According to Rademacher's theorem, as a Lipschitz function, $\delta$
is differentiable almost everywhere in $\Omega$.
Moreover, $|D\delta(x)|=1$ for almost every $x\in\Omega$.
Denoting $\omega_n=|B(0,1)|$, the Leibniz rule gives
\begin{align*}
D_iu_t^\alpha(x)
=&\:D_i\Big( \omega_n^{-1}(t\delta(x))^{\alpha-n} \Big)\int_{B(x,t\delta(x))}u(y)\,dy\\
&+\omega_n^{-1}(t\delta(x))^{\alpha-n}
D_i \left( \int_{B(x,t\delta(x))}u(y)\,dy \right), \qquad i=1,\dots,n,
\end{align*}
for almost every $x\in\Omega$, and by the chain rule
\begin{align*}
D_i \bigg( \int_{B(x,t\delta(x))}&u(y)\,dy \bigg)
=\int_{B(x,t\delta(x))}D_iu(y)\,dy\\
&+t D_i\delta(x) \int_{\partial B(x,t\delta(x))}u(y)\,d\mathcal H^{n-1}(y),
\qquad i=1,\dots,n,
\end{align*}
for almost every $x\in\Omega$.
Here we also used the fact that
$$
\frac\partial{\partial r}\int_{B(x,r)}u(y)\,dy
=\int_{\partial B(x,r)}u(y)\,d\mathcal H^{n-1}(y).
$$
Collecting the terms in a vector form, we obtain
\begin{equation} \label{Duta vector}
\begin{split}
Du_t^\alpha(x)
=\:&\omega_n^{-1}t^{\alpha-n}(\alpha-n)\delta(x)^{\alpha-n-1}D\delta(x)
\int_{B(x,t\delta(x))}u(y)\,dy \\
&+\omega_n^{-1}(t\delta(x))^{\alpha-n}
\int_{B(x,t\delta(x))}Du(y)\,dy \\
&+\omega_n^{-1}(t\delta(x))^{\alpha-n} t D\delta(x)
\int_{\partial B(x,t\delta(x))}u(y)\,d\mathcal H^{n-1}(y)
\end{split}
\end{equation}
for almost every $x\in\Omega$. Applying Gauss' theorem to the integral in the second term we have
$$
\int_{B(x,t\delta(x))}Du(y)\,dy
= \int_{\partial B(x,t\delta(x))}u(y)\nu(y)\,d\mathcal H^{n-1}(y),
$$
where $\nu(y)=(y-x)/(t\delta(x))$ is the unit outer normal of $B(x,t\delta(x))$.

Modifying the integrals into their average forms, we obtain
\begin{equation} \label{Duta average}
\begin{split}
Du_t^\alpha(x)
=\:&(\alpha-n)(t\delta(x))^\alpha\frac{D\delta(x)}{\delta(x)}
\vint{B(x,t\delta(x))}u(y)\,dy \\
&+n(t\delta(x))^{\alpha-1}
\vint{\partial B(x,t\delta(x))}u(y)\nu(y)\,d\mathcal H^{n-1}(y) \\
&+n(t\delta(x))^\alpha\frac{D\delta(x)}{\delta(x)}
\vint{\partial B(x,t\delta(x))}u(y)\,d\mathcal H^{n-1}(y)
\end{split}
\end{equation}
for almost every $x\in\Omega$. For the boundary integral terms, we have used the relation
between the Lebesgue measure of a ball and the Hausdorff measure of its boundary
$\mathcal H^{n-1}(\partial B(x,r)) = n\omega_nr^{n-1}$.

Taking the vector norms in the identity of the derivative and recalling that $0<t<1$
and $|D\delta(x)|=1$ for almost every $x\in\Omega$, we obtain
\begin{align*}
|Du_t^\alpha(x)|
\leq\:&|\alpha-n|(t\delta(x))^\alpha\frac{|D\delta(x)|}{\delta(x)}
\vint{B(x,t\delta(x))}|u(y)|\,dy \\
&+n(t\delta(x))^{\alpha-1}
\vint{\partial B(x,t\delta(x))}|u(y)||\nu(y)|\,d\mathcal H^{n-1}(y) \\
&+n(t\delta(x))^\alpha \frac{|D\delta(x)|}{\delta(x)}
\vint{\partial B(x,t\delta(x))}|u(y)|\,d\mathcal H^{n-1}(y) \\
\leq\:&n(t\delta(x))^{\alpha-1}
\vint{B(x,t\delta(x))}|u(y)|\,dy \\
&+n(t\delta(x))^{\alpha-1}
\vint{\partial B(x,t\delta(x))}|u(y)|\,d\mathcal H^{n-1}(y) \\
&+n(t\delta(x))^{\alpha-1}
\vint{\partial B(x,t\delta(x))}|u(y)|\,d\mathcal H^{n-1}(y) \\
\leq\:&C \big(\M_{\alpha-1,\Omega}u(x)+\mathcal S_{\alpha-1,\Omega}u(x) \big)
\end{align*}
for almost every $x\in\Omega$.
Thus, \eqref{Duta} holds for smooth functions.

The case $u\in L^p(\Omega)$ follows from an approximation argument.
For $u\in L^p(\Omega)$, there is a sequence $\{\varphi_j\}_j$ of functions
in $L^p(\Omega)\cap C^\infty(\Omega)$ such that $\varphi_j\to u$ in $L^p(\Omega)$ as $j\to\infty$.
Definition \eqref{uta} implies that
$$
u_t^\alpha(x)=\lim_{j\to\infty}(\varphi_j)_t^\alpha(x),
$$
when $x\in\Omega$. By the proved case for the smooth functions, we have
\begin{align} \label{Duta smooth}
\big|D(\varphi_j)_t^\alpha(x)\big|
\le C \big(\M_{\alpha-1,\Omega}\varphi_j(x)+\mathcal S_{\alpha-1,\Omega}\,\varphi_j(x) \big),
\qquad j=1,2,\dots,
\end{align}
for almost every $x\in\Omega$.
This inequality and the boundedness results \eqref{bdd in Lp rn} and \eqref{local spherical}
imply that
\begin{align*}
\|D(\varphi_j)_t^\alpha\|_{L^q(\Omega)}
&\le C\big(\|\M_{\alpha-1,\Omega}\varphi_j\|_{L^q(\Omega)}
+\|\mathcal S_{\alpha-1,\Omega}\,\varphi_j\|_{L^q(\Omega)}\big)
\\
&\le C\|\varphi_j\|_{L^p(\Omega)}, \qquad j=1,2,\dots,
\end{align*}
where $q=np/(n-(\alpha-1)p)$ and $C$ depends only on $n$, $p$ and $\alpha$.
Thus, $\{|D(\varphi_j)_t^\alpha|\}_j$ is a bounded
sequence in $L^q(\Omega)$ and has a weakly converging subsequence
$\{|D(\varphi_{j_k})_t^\alpha|\}_k$ in $L^q(\Omega)$.
Since $(\varphi_j)_t^\alpha$ converges pointwise to $u_t^\alpha$,
we conclude that the weak gradient $Du_t^\alpha$ exists
and that $|D(\varphi_{j_k})_t^\alpha|$ converges weakly to $|Du_t^\alpha|$ in
$L^q(\Omega)$ as $k\to\infty$. This follows from
the definitions of weak convergence and weak derivatives.

To establish \eqref{Duta}, we want to proceed to the limit in
\eqref{Duta smooth} as $j\to\infty$.
By the sublinearity of the maximal operator and
\eqref{bdd in Lp rn}, we obtain
\begin{align*}
\|\M_{\alpha-1,\Omega}\varphi_j-\M_{\alpha-1,\Omega}u\|_{L^q(\Omega)}
&\le\|\M_{\alpha-1,\Omega}(\varphi_j-u)\|_{L^q(\Omega)}
\\
&\le C\|\varphi_j-u\|_{L^p(\Omega)}, \qquad j=1,2,\dots.
\end{align*}
Analogously, by \eqref{local spherical}, we get
\begin{align*}
\|\mathcal S_{\alpha-1,\Omega}\,\varphi_j-\mathcal S_{\alpha-1,\Omega}u\|_{L^q(\Omega)}
\le C\|\varphi_j-u\|_{L^p(\Omega)}, \qquad j=1,2,\dots.
\end{align*}
%and further
%\begin{align*}
%\|(\M_{\alpha-1,\Omega}\varphi_j+\mathcal S_{\alpha-1,\Omega}\,\varphi_j)
%-(\M_{\alpha-1,\Omega}u+\mathcal S_{\alpha-1,\Omega}u)\|_{L^q(\Omega)}
%\le C\|\varphi_j-u\|_{L^p(\Omega)}.
%\end{align*}
Hence $\M_{\alpha-1,\Omega}\varphi_j+\mathcal S_{\alpha-1,\Omega}\,\varphi_j$ converges
to $\M_{\alpha-1,\Omega}u+\mathcal S_{\alpha-1,\Omega}u$ in $L^q(\Omega)$ as $j \to \infty$.

To complete the proof, we need the following simple property of weak convergence:
If $f_k\to f$ and $g_k\to g$ weakly in $L^q(\Omega)$ and $f_k\le g_k$, $k=1,2,\dots$, almost everywhere in $\Omega$, then 
$f\le g$ almost everywhere in $\Omega$.
Applying the property to \eqref{Duta smooth} with
$$
f_k=\big|D(\varphi_{j_k})_t^\alpha\big| \quad \text{and} \quad
g_k=C\big(\M_{\alpha-1,\Omega}\varphi_{j_k}+\mathcal S_{\alpha-1,\Omega}\,\varphi_{j_k}\big),
$$
we obtain \eqref{Duta}.
This completes the proof.
\end{proof}

The gradient of the local fractional maximal function of an $L^{p}$-function satisfies a pointwise estimate in terms
of a local fractional maximal function and local spherical fractional maximal function of the function itself.
The following is the main result of this section.
\begin{theorem} \label{DMaOu theorem}
Let $p>n/(n-1)$ and let 
$1\leq\alpha<\min\{(n-1)/p,\ n-2n/((n-1)p)\}+1$.
If $u\in L^p(\Omega)$, then $|D\M_{\alpha,\Omega}u| \in L^q(\Omega)$ with $q=np/(n-(\alpha-1)p)$. 
Moreover,
\begin{equation} \label{DMaOu}
|D\M_{\alpha,\Omega}u(x)|\le C \big(\M_{\alpha-1,\Omega}u(x)+\mathcal S_{\alpha-1,\Omega}u(x)\big)
\end{equation}
for almost every $x\in\Omega$, where the constant $C$ depends only on $n$.
\end{theorem}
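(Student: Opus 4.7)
The plan is to realize $\M_{\alpha,\Omega}u$ as a countable supremum of the average functions $|u|_t^\alpha$ analyzed in Lemma~\ref{Duta lemma}, and then to lift the pointwise gradient estimate from each $|u|_t^\alpha$ to the supremum by a weak compactness argument in the spirit of the closing step of that lemma.

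First I would observe that every admissible radius in the definition of $\M_{\alpha,\Omega}u(x)$ has the form $t\delta(x)$ with $0<t<1$, so $\M_{\alpha,\Omega}u(x)=\sup_{0<t<1}|u|_t^\alpha(x)$. Since $|u|\in L^1_{\loc}(\Omega)$, the map $t\mapsto|u|_t^\alpha(x)$ is continuous on $(0,1)$, and fixing a countable dense set $\{t_j\}_{j\in\n}\subset(0,1)$ reduces the representation to $\M_{\alpha,\Omega}u(x)=\sup_j|u|_{t_j}^\alpha(x)$.

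Next I would apply Lemma~\ref{Duta lemma} to $|u|\in L^p(\Omega)$: each $v_j:=|u|_{t_j}^\alpha$ is weakly differentiable and satisfies
\[
|Dv_j(x)|\le C\bigl(\M_{\alpha-1,\Omega}u(x)+\mathcal S_{\alpha-1,\Omega}u(x)\bigr)=:g(x)
\]
for almost every $x\in\Omega$. By Theorems~\ref{fracM bounded rn} and~\ref{spherical} used locally, $g\in L^q(\Omega)$. I would then form the finite maxima $w_N=\max_{j\le N}v_j$ and use the fact that the maximum of finitely many Sobolev functions is Sobolev, with weak gradient dominated pointwise by the maximum of the individual gradients. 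This gives $w_N\in W^{1,q}_{\loc}(\Omega)$ with $|Dw_N|\le g$ almost everywhere, uniformly in $N$, while $w_N\nearrow\M_{\alpha,\Omega}u$ pointwise and, by~\eqref{bdd in Lp rn}, the limit lies in $L^{p^\ast}(\Omega)\subset L^1_{\loc}(\Omega)$.

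Finally I would pass to $N\to\infty$: the uniform $L^q$-bound on the gradients yields a subsequence $|Dw_{N_k}|$ converging weakly in $L^q(\Omega)$, and the argument at the end of the proof of Lemma~\ref{Duta lemma} goes through verbatim to show that the weak gradient $D\M_{\alpha,\Omega}u$ exists, that $|Dw_{N_k}|\rightharpoonup|D\M_{\alpha,\Omega}u|$ in $L^q(\Omega)$, and, via the weak-convergence comparison principle applied to the common majorant $g$, that~\eqref{DMaOu} holds; in particular $|D\M_{\alpha,\Omega}u|\in L^q(\Omega)$. The main obstacle I expect is precisely this last step: a pointwise supremum of weakly differentiable functions is not automatically weakly differentiable, so one really has to combine the uniform $L^q$-bound from Step~2 with weak compactness. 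Steps~1 and~2 are arranged exactly so that this machinery applies.
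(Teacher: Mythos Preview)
Your proposal is correct and follows essentially the same route as the paper: enumerate a dense set of parameters $t_j\in(0,1)$, apply Lemma~\ref{Duta lemma} to each $|u|_{t_j}^\alpha$, take finite maxima to get a uniform $L^q$-bound on the gradients, and pass to the limit via weak compactness exactly as in the closing step of Lemma~\ref{Duta lemma}. The only cosmetic difference is that you spell out the continuity argument justifying why a countable dense set of $t$'s suffices, which the paper leaves implicit.
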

\begin{proof}
Let $t_j$, $j=1,2,\dots$, be an enumeration of the rationals
between $0$ and $1$ and let 
\[
u_j=|u|_{t_j}^\alpha, \qquad j=1,2,\dots.
\]
By Lemma \ref{Duta lemma}, we see that $|Du_j|\in L^q(\Omega)$
for every $j=1,2,\dots$ and \eqref{Duta} gives us the estimate
$$
|Du_j(x)|
\le  C \big(\M_{\alpha-1,\Omega}u(x)+\mathcal S_{\alpha-1,\Omega}u(x)\big),
\qquad j=1,2,\dots,
$$
for almost every $x\in\Omega$.
We define $v_k\colon\Omega\to[-\infty,\infty]$ as the pointwise maximum
$$
v_k(x)
=\max_{1\le j\le k}u_j(x),
\qquad k=1,2,\dots.
$$
Then $\{v_k\}_k$ is an increasing sequence of functions converging pointwise to 
$\M_{\alpha,\Omega} u$.
Moreover, the weak gradients $Dv_k$, $k=1,2,\dots$, exist since
$Du_j$ exists for each $j=1,2,\dots$, and we can estimate
\begin{equation} \label{Dvkx}
\begin{split}
|Dv_k(x)|
&=\big|D\max_{1\le j\le k}u_j(x)\big|
\le\max_{1\le j\le k}|Du_j(x)|\\
& \le C \big(\M_{\alpha-1,\Omega}u(x)+\mathcal S_{\alpha-1,\Omega}u(x)\big),
\qquad k=1,2,\dots,
\end{split}
\end{equation}
for almost every $x\in\Omega$.

The rest of the proof goes along the lines of the final part of the proof for Lemma \ref{Duta lemma}.
By \eqref{Dvkx}, \eqref{bdd in Lp rn} and \eqref{local spherical},
we obtain
\begin{align*}
\|Dv_k\|_{L^q(\Omega)}
&\le C\big(\|\M_{\alpha-1,\Omega}u\|_{L^q(\Omega)}
+\|\mathcal S_{\alpha-1,\Omega}u\|_{L^q(\Omega)}\big)\\
&\le C\|u\|_{L^p(\Omega)}, \qquad k=1,2,\dots.
\end{align*}
Hence $\{|Dv_k|\}_k$ is a bounded sequence in $L^q(\Omega)$
with $v_k\to\M_{\alpha,\Omega} u$ pointwise in $\Omega$ as $k\to\infty$. 
Thus, there is a weakly converging subsequence $\{|Dv_{k_j}|\}_j$
that has to converge weakly to $|D\M_{\alpha,\Omega}u|$ in $L^q(\Omega)$ as $j\to\infty$.
We may proceed to the weak limit in \eqref{Dvkx}, using the
same argument as in the end of the proof of Lemma \ref{Duta lemma},
and claim \eqref{DMaOu} follows.
\end{proof}

\begin{corollary} \label{bounded sobolev}
Let $p>n/(n-1)$ and let $1\leq\alpha<n/p$.
If $|\Omega|<\infty$  and $u \in L^p(\Omega)$,
then $\mathcal M_{\alpha,\Omega}u \in W^{1,q}(\Omega)$ with $q=np/(n-(\alpha-1)p)$.
\end{corollary}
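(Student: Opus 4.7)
The plan is to verify the two defining properties of $W^{1,q}(\Omega)$ separately: first that $\mathcal{M}_{\alpha,\Omega}u\in L^q(\Omega)$, and second that $|D\mathcal{M}_{\alpha,\Omega}u|\in L^q(\Omega)$. The bulk of the work is already done in Theorem~\ref{DMaOu theorem}, so this will be essentially a matter of checking that the hypotheses of the corollary are strong enough to invoke it, together with one Hölder-type absorption that uses $|\Omega|<\infty$.

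First I would check that the hypothesis $1\leq\alpha<n/p$ of the corollary implies the more restrictive-looking hypothesis $1\leq\alpha<\min\{(n-1)/p,\ n-2n/((n-1)p)\}+1$ of Theorem~\ref{DMaOu theorem}. Comparing $n/p$ with $(n-1)/p+1=(n-1+p)/p$, the inequality $n/p<(n-1+p)/p$ reduces to $p>1$, which holds since $p>n/(n-1)$. Comparing $n/p$ with $n+1-2n/((n-1)p)$, the inequality reduces after clearing denominators to $p>n/(n-1)$, which is exactly the standing assumption. Thus Theorem~\ref{DMaOu theorem} applies and yields $|D\mathcal{M}_{\alpha,\Omega}u|\in L^q(\Omega)$ with $q=np/(n-(\alpha-1)p)$.

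Next I would handle the $L^q$-integrability of $\mathcal{M}_{\alpha,\Omega}u$ itself. Since $\alpha<n/p$, the Sobolev-type bound \eqref{bdd in Lp rn} gives $\mathcal{M}_{\alpha,\Omega}u\in L^{p^*}(\Omega)$ with $p^*=np/(n-\alpha p)$. Because $\alpha\geq 1$, we have $n-\alpha p<n-(\alpha-1)p$, so $p^*>q$. The assumption $|\Omega|<\infty$ then allows us to conclude via Hölder's inequality that $L^{p^*}(\Omega)\hookrightarrow L^q(\Omega)$, and hence $\mathcal{M}_{\alpha,\Omega}u\in L^q(\Omega)$ with a norm bound depending only on $n$, $p$, $\alpha$, and $|\Omega|$.

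Combining the two steps gives $\mathcal{M}_{\alpha,\Omega}u\in W^{1,q}(\Omega)$, as claimed. There is no real obstacle here; the only mildly delicate point is the hypothesis check in the first step, since the bound on $\alpha$ in the corollary is stated in the clean form $\alpha<n/p$ rather than the minimum appearing in Theorem~\ref{DMaOu theorem}, and one has to see that the former is genuinely stronger under the standing assumption $p>n/(n-1)$.
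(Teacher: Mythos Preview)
Your proposal is correct and follows essentially the same approach as the paper: verify that the cleaner hypothesis $\alpha<n/p$ implies the hypothesis of Theorem~\ref{DMaOu theorem}, invoke that theorem for the gradient bound, use \eqref{bdd in Lp rn} for $L^{p^*}$-integrability of $\M_{\alpha,\Omega}u$, and then use $|\Omega|<\infty$ with H\"older's inequality to descend from $L^{p^*}$ to $L^q$. Your algebraic check of the hypothesis comparison is slightly more explicit than the paper's (which simply asserts $n/p\le\min\{(n-1)/p,\ n-2n/((n-1)p)\}+1$), but the logic is identical.
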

\begin{proof}
By \eqref{bdd in Lp rn} we have $\mathcal M_{\alpha,\Omega}u \in L^{p^*}(\Omega)$ 
and $|D \mathcal M_{\alpha,\Omega}u| \in L^q(\Omega)$ by Theorem \ref{DMaOu theorem} because
\[
\frac np \le \min\left\{\frac{n-1}{p},\ n-\frac{2n}{(n-1)p}\right\}+1.
\]
Since $q<p^*$, we have
%\begin{align*}
\[
\|\M_{\alpha,\Omega}u\|_{L^q(\Omega)}
%&= \bigg( \int_\Omega 1 \cdot (\M_{\alpha,\Omega}u)^q \, dx \bigg)^{1/q} \\
%&\le \bigg( \bigg( \int_\Omega 1^{p^*/(p^*-q)} \, dx \bigg)^{(p^*-q)/p^*}
%\bigg( \int_\Omega (\M_{\alpha,\Omega}u)^{q \cdot p^*/q} \, dx \bigg)^{q/p^*} \bigg)^{1/q} \\
%&= |\Omega|^{(p^*-q)/(p^*q)} \bigg( \int_\Omega (\M_{\alpha,\Omega}u)^{p^*} \, dx \bigg)^{1/p^*} \
\le |\Omega|^{1/q-1/p^*} \|\M_{\alpha,\Omega}u\|_{L^{p^*}(\Omega)} < \infty
\]
%\end{align*}
by H\"older's inequality. Hence $\M_{\alpha,\Omega}u \in W^{1,q}(\Omega)$.
\end{proof}
Next we will show that the local fractional maximal operator actually maps $L^{p}(\Omega)$ to the Sobolev space with zero boundary values.  
For this we need the following Hardy-type result proved in \cite[Theorem 3.13]{KiMa}.

\begin{theorem}\label{hardy sobo nolla rn}
Let $\Omega\subset \rn$, $\Omega\ne\rn$, be an open set. If $u\in W^{1,p}(\Omega)$ and 
 \[
 \int_{\Omega}\bigg(
 \frac{u(x)}{\dist(x,\rn\setminus\Omega)}\bigg)^{p}\,dx<\infty,
\]
then $u\in W^{1,p}_{0}(\Omega)$.
\end{theorem}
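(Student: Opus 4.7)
The plan is to approximate $u$ in $W^{1,p}(\Omega)$ by a sequence of functions whose supports are compact subsets of $\Omega$; standard mollification then places each approximant in $W^{1,p}_0(\Omega)$, and the closedness of $W^{1,p}_0(\Omega)$ in $W^{1,p}(\Omega)$ gives $u\in W^{1,p}_0(\Omega)$. The weighted integrability hypothesis is exactly what is needed to control the error produced by cutting off near $\partial\Omega$.

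First, let $\delta(x)=\dist(x,\rn\setminus\Omega)$ and introduce the Lipschitz cutoffs
$$
\eta_k(x)=\min\bigl\{1,\max\{0,k\delta(x)-1\}\bigr\},\qquad k=1,2,\dots,
$$
so that $\eta_k=0$ on $\{\delta\le 1/k\}$, $\eta_k=1$ on $\{\delta\ge 2/k\}$, and $|D\eta_k|\le k$ almost everywhere, with $D\eta_k$ supported in the annular set $A_k=\{1/k<\delta<2/k\}$. If $\Omega$ is unbounded, I would additionally multiply by a standard smooth spatial cutoff $\zeta_R$ equal to $1$ on $B(0,R)$ and supported in $B(0,2R)$, with $|D\zeta_R|\le C/R$, and then let $R\to\infty$ at the end by an elementary argument.

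Next I would verify that $\eta_k u\to u$ in $W^{1,p}(\Omega)$ as $k\to\infty$. The convergence in $L^p(\Omega)$ is immediate by dominated convergence. For the weak gradient, apply the product rule,
$$
D(\eta_k u)=\eta_k Du+uD\eta_k.
$$
The first term tends to $Du$ in $L^p(\Omega)$ again by dominated convergence. For the second, observe that on $A_k$ we have $\delta(x)<2/k$, hence $k\le 2/\delta(x)$ and therefore $|D\eta_k(x)|\le 2/\delta(x)$, so
$$
\int_\Omega |uD\eta_k|^p\,dx
\le 2^p\int_{A_k}\left(\frac{|u(x)|}{\delta(x)}\right)^p dx.
$$
The integrand on the right is dominated by the function appearing in the hypothesis, which is integrable, and the sets $A_k$ shrink to a null set as $k\to\infty$; dominated convergence sends the right-hand side to zero. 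Since each $\eta_k u$ (and each $\eta_k\zeta_R u$ in the unbounded case) has support at positive distance from $\partial\Omega$ and finite $W^{1,p}$-norm, standard mollification produces an approximating sequence in $C_c^\infty(\Omega)$ in the $W^{1,p}$-norm, so $\eta_k u\in W^{1,p}_0(\Omega)$, and hence $u\in W^{1,p}_0(\Omega)$.

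The main obstacle, and the only place the hypothesis is used, is obtaining the pointwise bound $|D\eta_k(x)|\le C/\delta(x)$ on $A_k$, which is what allows the cutoff error to be absorbed into the weighted Hardy integral. This is precisely why the cutoff must be built from the distance function rather than a generic mollifier; once this matching between $|D\eta_k|$ and the Hardy weight is in place, the remainder of the argument is routine.
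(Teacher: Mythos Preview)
The paper does not give its own proof of this statement; it is quoted as \cite[Theorem~3.13]{KiMa} and used as a black box. Your argument is correct and is in fact the standard proof of this result: the Lipschitz cutoffs $\eta_k$ built from $\delta$ yield an error term $u\,D\eta_k$ that is pointwise bounded by $2|u|/\delta$ on $A_k=\{1/k<\delta<2/k\}$, and since $\chi_{A_k}\to 0$ pointwise with integrable majorant $(|u|/\delta)^p$, dominated convergence gives $\eta_k u\to u$ in $W^{1,p}(\Omega)$; the spatial cutoff $\zeta_R$ then makes the support compact so that mollification places each approximant in $W^{1,p}_0(\Omega)$. One minor wording issue: the sets $A_k$ are not nested, so ``shrink to a null set'' should be read as $\chi_{A_k}\to 0$ pointwise, which is what you actually use.
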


\begin{corollary}\label{sobo nolla rn}
Let $\Omega\subset\rn$ be an open set with $|\Omega|<\infty$. Let $p>n/(n-1)$ and $1\le \alpha<n/p$. 
If $u \in L^p(\Omega)$, then $\mathcal M_{\alpha,\Omega}u \in W^{1,q}_0(\Omega)$ with $q=np/(n-(\alpha-1)p)$.
\end{corollary}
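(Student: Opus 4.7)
The plan is to reduce to the Hardy-type criterion of Theorem \ref{hardy sobo nolla rn}. The Sobolev regularity $\M_{\alpha,\Omega}u\in W^{1,q}(\Omega)$ is already furnished by Corollary \ref{bounded sobolev}, so the only remaining task is to show that
\[
\int_{\Omega}\left(\frac{\M_{\alpha,\Omega}u(x)}{\dist(x,\rn\setminus\Omega)}\right)^{q}\,dx<\infty.
\]

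The key step is a trivial but decisive pointwise estimate: writing $\delta(x)=\dist(x,\rn\setminus\Omega)$, every admissible radius $0<r<\delta(x)$ in the definition of $\M_{\alpha,\Omega}u$ satisfies $r^{\alpha}=r\cdot r^{\alpha-1}\le\delta(x)\,r^{\alpha-1}$. Taking the supremum over admissible radii yields
\[
\M_{\alpha,\Omega}u(x)\le\delta(x)\,\M_{\alpha-1,\Omega}u(x)
\qquad\text{for all }x\in\Omega.
\]
This is the one place where the locality of the maximal operator really buys something, and it is precisely what allows $\M_{\alpha,\Omega}u$ to decay to zero at $\partial\Omega$ in the Sobolev sense.

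Next I would invoke the $L^{p}\to L^{q}$ boundedness of $\M_{\alpha-1,\Omega}$. Since $1\le\alpha<n/p$, we have $0\le\alpha-1<n/p$, and the corresponding target exponent $np/(n-(\alpha-1)p)$ is exactly $q$; for $\alpha=1$ this is the local Hardy--Littlewood bound ($L^{p}\to L^{p}$ for $p>1$), and for $\alpha>1$ it is the instance of \eqref{bdd in Lp rn} with $\alpha$ replaced by $\alpha-1$. Thus $\M_{\alpha-1,\Omega}u\in L^{q}(\Omega)$, which by the pointwise bound above gives
\[
\int_{\Omega}\left(\frac{\M_{\alpha,\Omega}u(x)}{\delta(x)}\right)^{q}dx
\le\int_{\Omega}\bigl(\M_{\alpha-1,\Omega}u(x)\bigr)^{q}\,dx
\le C\,\|u\|_{L^{p}(\Omega)}^{q}<\infty.
\]
Combined with $\M_{\alpha,\Omega}u\in W^{1,q}(\Omega)$, Theorem \ref{hardy sobo nolla rn} then concludes that $\M_{\alpha,\Omega}u\in W^{1,q}_{0}(\Omega)$.

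I do not anticipate any real obstacle: the entire argument hinges on the elementary scaling relation $r^{\alpha}\le\delta(x)\,r^{\alpha-1}$ valid for $r<\delta(x)$, and on recognising that the resulting Hardy-type integrand is controlled by a lower-order fractional maximal function whose Sobolev-type $L^{p}\to L^{q}$ bound has exactly the right exponent $q$.
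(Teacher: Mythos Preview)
Your proof is correct and follows essentially the same route as the paper: invoke Corollary~\ref{bounded sobolev} for $W^{1,q}$ membership, establish the pointwise inequality $\M_{\alpha,\Omega}u(x)\le\delta(x)\,\M_{\alpha-1,\Omega}u(x)$, use the $L^{p}\to L^{q}$ bound for $\M_{\alpha-1,\Omega}$ to verify the Hardy integral, and conclude via Theorem~\ref{hardy sobo nolla rn}. Your write-up is simply more explicit about the scaling step and the case $\alpha=1$, but the argument is the same.
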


\begin{proof}
By Corollary \ref{bounded sobolev}, $\M_{\alpha,\Omega}u\in W^{1,q}(\Omega)$. It suffices to show that 
\begin{equation}\label{hardy rn}
 \int_{\Omega}\bigg(
 \frac{\M_{\alpha,\Omega}u(x)}{\dist(x,\rn\setminus\Omega)}\bigg)^{q}\,dx<\infty.
\end{equation}
The claim then follows from Theorem \ref{hardy sobo nolla rn}.
Since $$\M_{\alpha,\Omega}u(x)\le\dist(x,\rn\setminus\Omega)\M_{\alpha-1,\Omega}u(x)$$ for every $x\in\Omega$, inequality 
\eqref{hardy rn} follows from \eqref{bdd in Lp rn}. Hence $\M_{\alpha,\Omega}u\in W_0^{1,q}(\Omega)$.
\end{proof}

Next we derive estimates for Sobolev functions.
In general, Sobolev functions do satisfy neither any better inequality for gradients nor better embedding than $L^{p}$-functions,
but since no spherical maximal function is needed in the Sobolev setting, the estimate holds also when $1<p\le n/(n-1)$.
The following is a variant of Lemma \ref{Duta lemma}.

\begin{lemma} \label{Duta lemma sobo}
Let $1<p<n$, $1\leq\alpha<n/p$ and let $0<t<1$. 
If $|\Omega|<\infty$ and $u\in W^{1,p}(\Omega)$, then 
$|Du_t^\alpha| \in L^q(\Omega)$ with $q=np/(n-(\alpha-1)p)$. Moreover,
\begin{equation} \label{Duta sobo}
|Du_t^\alpha(x)|\le 2 \M_{\alpha,\Omega}|Du|(x) + \alpha \M_{\alpha-1,\Omega} u(x)
\end{equation}
for almost every $x\in\Omega$.
\end{lemma}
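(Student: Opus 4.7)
The plan is to mirror the proof of Lemma \ref{Duta lemma}: establish the pointwise bound first for smooth $u$, then pass to the limit by approximation. The novelty is that both spherical integrals appearing in the decomposition \eqref{Duta vector} will be eliminated via Gauss' theorem, so the right-hand side is controlled purely by fractional maximal functions of $u$ and of $|Du|$. This avoids any spherical maximal estimate and hence works for every $p\in(1,n)$.

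Assume first $u\in W^{1,p}(\Omega)\cap C^\infty(\Omega)$ and start from \eqref{Duta vector}. The middle term is converted by Gauss to $(t\delta(x))^\alpha\vint{B(x,t\delta(x))}Du(y)\,dy$, contributing $\M_{\alpha,\Omega}|Du|(x)$. For the first and third terms I would apply the divergence identity
\[
\vint{\partial B(x,r)}u\,d\mathcal H^{n-1}=\vint{B(x,r)}u\,dy+\frac1n\vint{B(x,r)}Du(y)\cdot(y-x)\,dy,
\]
which follows from $\operatorname{div}(u(y)(y-x))=nu(y)+Du(y)\cdot(y-x)$ and the divergence theorem. Substituting into the third term, the spherical average cancels the factor $-n$ in the first term, leaving $\alpha(t\delta(x))^\alpha\frac{D\delta(x)}{\delta(x)}\vint{B(x,r)}u\,dy$, which is bounded by $\alpha\M_{\alpha-1,\Omega}u(x)$ because $t<1$ and $|D\delta|\le 1$, together with $(t\delta(x))^\alpha\frac{D\delta(x)}{\delta(x)}\vint{B(x,r)}Du(y)\cdot(y-x)\,dy$, which is bounded by $\M_{\alpha,\Omega}|Du|(x)$ because $|y-x|\le r=t\delta(x)$. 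Summing these three contributions yields \eqref{Duta sobo} for smooth $u$.

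For general $u\in W^{1,p}(\Omega)$ I would approximate by $\varphi_j\in C^\infty(\Omega)\cap W^{1,p}(\Omega)$ with $\varphi_j\to u$ in $W^{1,p}(\Omega)$. Combining the smooth bound with \eqref{bdd in Lp rn}---applied to $\M_{\alpha-1,\Omega}\varphi_j$ directly at exponent $q$, and to $\M_{\alpha,\Omega}|D\varphi_j|$ at exponent $p^*$ followed by H\"older's inequality on $\Omega$ (which uses $|\Omega|<\infty$ and $q<p^*$)---the sequence $\{|D(\varphi_j)_t^\alpha|\}$ is bounded in $L^q(\Omega)$. Since $(\varphi_j)_t^\alpha\to u_t^\alpha$ in $L^{p^*}(\Omega)$ by the same estimate applied to the difference, any weak limit of a subsequence of the gradients in $L^q$ must coincide with $Du_t^\alpha$, and the weak-limit comparison used at the end of Lemma \ref{Duta lemma} transfers the pointwise inequality. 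The main obstacle is the algebraic step in the smooth case: finding the right divergence identity so that the spherical term in \eqref{Duta vector} cancels cleanly and produces exactly the constants $2$ and $\alpha$ in \eqref{Duta sobo}; everything else is routine and runs parallel to the proof of Lemma \ref{Duta lemma}, simplified by the absence of a spherical maximal function.
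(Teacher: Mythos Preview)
Your proposal is correct and follows essentially the same route as the paper: the paper also starts from \eqref{Duta vector}, keeps the middle term as $(t\delta)^\alpha\vint{B}Du$, and eliminates the spherical average in the third term via the identity $\vint{\partial B}u=\vint{B}u+\tfrac1n\vint{B}Du\cdot(y-x)$, which it obtains from Green's first identity with $v(y)=|y-x|^2/2$ rather than from $\operatorname{div}(u(y)(y-x))$---the two derivations are of course the same computation. The approximation step, including the use of $|\Omega|<\infty$ to pass from $L^{p^*}$ to $L^q$, is also identical; the only slip in your write-up is the phrase ``converted by Gauss'' for the middle term, since in \eqref{Duta vector} that term already is the ball integral of $Du$ and needs no conversion.
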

\begin{proof}
Suppose first that $u\in W^{1,p}(\Omega) \cap C^\infty(\Omega)$.
Equation \eqref{Duta vector} in the proof of Lemma \ref{Duta lemma} holds in this case, as well,
and modifying the integrals into average forms we obtain
\begin{equation*}
\begin{split}
Du_t^\alpha(x)
=\:&\alpha(t\delta(x))^\alpha\frac{D\delta(x)}{\delta(x)}
\vint{B(x,t\delta(x))}u(y)\,dy \\
&+n(t\delta(x))^\alpha\frac{D\delta(x)}{\delta(x)}
\left( \vint{\partial B(x,t\delta(x))}u(y)\,d\mathcal H^{n-1}(y) - \vint{B(x,t\delta(x))}u(y)\,dy \right)\\
&+(t\delta(x))^\alpha
\vint{B(x,t\delta(x))} Du(y)\,dy
\end{split}
\end{equation*}
for almost every $x\in\Omega$.

In order to estimate the difference of the two integrals
in the parenthesis, we use Green's first identity
$$
\int_{\partial B(x,r)}u(y)
\frac{\partial v}{\partial\nu}(y)\,d\mathcal H^{n-1}(y)
=\int_{B(x,r)}
\big(u(y)\Delta v(y)+D u(y)\cdot Dv(y)\big)\,dy,
$$
where $\nu(y)=(y-x)/r$ is the unit outer normal of $B(x,r)$.
We choose $r=t\delta(x)$ and $v(y)=|y-x|^2/2$.
With these choices 
\[
Dv(y)=y-x, \quad\frac{\partial v}{\partial\nu}(y)=r, \quad\Delta v(y)=n
\] 
and Green's formula reads
$$
\vint{\partial B(x,t\delta(x))}u(y)\,d\mathcal H^{n-1}(y)
-\vint{B(x,t\delta(x))}u(y)\,dy
=\frac1n\,\vint{B(x,t\delta(x))}Du(y)\cdot(y-x)\,dy.
$$

Taking the vector norms in the identity of the derivative and recalling
that $|D\delta(x)|=1$ almost everywhere and $0<t<1$, we obtain
\begin{align*}
|Du_t^\alpha(x)|
\leq\:&\alpha(t\delta(x))^\alpha\frac{|D\delta(x)|}{\delta(x)}
\vint{B(x,t\delta(x))}|u(y)|\,dy \\
&+n(t\delta(x))^\alpha\frac{|D\delta(x)|}{\delta(x)}
\frac1n\vint{B(x,t\delta(x))}|Du(y)||y-x|\,dy \\
&+(t\delta(x))^\alpha
\vint{B(x,t\delta(x))}|Du(y)|\,dy \\
\leq\:&\alpha (t\delta(x))^{\alpha-1}
\vint{B(x,t\delta(x))}|u(y)|\,dy \\
&+(t\delta(x))^\alpha
\vint{B(x,t\delta(x))}|Du(y)|\,dy \\
&+(t\delta(x))^\alpha
\vint{B(x,t\delta(x))}|Du(y)|\,dy \\
\leq\:&\alpha \M_{\alpha-1,\Omega}u(x)+2\M_{\alpha,\Omega}|Du|(x)
\end{align*}
for almost every $x\in\Omega$. Thus, \eqref{Duta sobo} holds for smooth functions.

The case $u\in W^{1,p}(\Omega)$
follows from an approximation argument.
For $u\in W^{1,p}(\Omega)$, there is a sequence $\{\varphi_j\}_j$
of functions in $W^{1,p}(\Omega)\cap C^\infty(\Omega)$ such that
$\varphi_j\to u$ in $W^{1,p}(\Omega)$ as $j\to\infty$.
By definition \eqref{uta} we see that
$$
u_t^\alpha(x)=\lim_{j\to\infty}(\varphi_j)_t^\alpha(x),
$$
when $x\in\Omega$.
%It is clear that
%\begin{align} \label{uta smooth sobo}
%\big|(\varphi_j)_t^\alpha(x)\big|
%\le (t\delta(x))^\alpha \vint{B(x,t\delta(x))}|\varphi_j(y)|\,dy
%\le\M_{\alpha,\Omega}\varphi_j(x),
%\qquad j=1,2,\dots,
%\end{align}
%for every $x\in\Omega$, and
By the proved case for smooth functions we have
\begin{align} \label{Duta smooth sobo}
\big|D(\varphi_j)_t^\alpha(x)\big|
\le 2 \M_{\alpha,\Omega}|D\varphi_j|(x) + \alpha \M_{\alpha-1,\Omega}\varphi_j(x),
\qquad j=1,2,\dots,
\end{align}
for almost every $x\in\Omega$. Let $p^*=np/(n-\alpha p)$ and $q=np/(n-(\alpha-1)p)$.
Then $\|f\|_{L^q(\Omega)}<C\|f\|_{L^{p^*}(\Omega)}$ for any $f \in L^{p^*}(\Omega)$
since $q<p^*$ and $|\Omega|<\infty$.
The estimate \eqref{Duta smooth sobo} and the boundedness
result \eqref{bdd in Lp rn} imply
\begin{align*}
\big\|D(\varphi_j)_t^\alpha\big\|_{L^q(\Omega)}
&\le 2\big\|\M_{\alpha,\Omega}|D\varphi_j|\big\|_{L^q(\Omega)}
+\alpha\big\|\M_{\alpha-1,\Omega}\varphi_j\big\|_{L^q(\Omega)}
\\
&\le C\big\|\M_{\alpha,\Omega}|D\varphi_j|\big\|_{L^{p^*}(\Omega)}
+\alpha\big\|\M_{\alpha-1,\Omega}\varphi_j\big\|_{L^q(\Omega)}
\\
&\le C \|D\varphi_j\|_{L^p(\Omega)}
+C\|\varphi_j\|_{L^p(\Omega)}
\\
&\le C\|\varphi_j\|_{W^{1,p}(\Omega)}, \qquad j=1,2,\dots,
\end{align*}
where $C$ depends on $n$, $p$, $\alpha$ and $|\Omega|$.
Thus, $\{D(\varphi_j)_t^\alpha\}_j$ is a bounded
sequence in $L^q(\Omega)$ and has a weakly converging subsequence
$\{D(\varphi_{j_k})_t^\alpha\}_k$. Since $(\varphi_j)_t^\alpha$ converges to $u_t^\alpha$
pointwise, we conclude that the Sobolev derivative $Du_t^\alpha$ exists
and that $D(\varphi_{j_k})_t^\alpha\to Du_t^\alpha$ weakly in
$L^q(\Omega)$ as $k\to\infty$.
%This is a standard argument yielding the desired conclusion that
%$u_t^\alpha$ belongs to $W^{1,q}(\Omega)$.

To establish \eqref{Duta sobo}, we want to proceed to the limit in
\eqref{Duta smooth sobo} as $j\to\infty$.
This goes as in the proof of Lemma \ref{Duta lemma},
and we obtain the claim.
\end{proof}

The following is a variant of Theorem \ref{DMaOu theorem} for Sobolev functions.

\begin{theorem} \label{DMaOu theorem sobo}
Let $1<p<n$ and let $1\leq\alpha<n/p$. If $|\Omega|<\infty$ and $u\in W^{1,p}(\Omega)$, 
then $\M_{\alpha,\Omega}u \in W^{1,q}(\Omega)$ with $q=np/(n-(\alpha-1)p)$. Moreover,
\[
|D\M_{\alpha,\Omega}u(x)|\le 2 \M_{\alpha,\Omega}|Du|(x) + \alpha \M_{\alpha-1,\Omega} u(x)
\]
for almost every $x\in\Omega$.
\end{theorem}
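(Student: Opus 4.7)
The plan is to follow the same scheme used in the proof of Theorem \ref{DMaOu theorem}, but replacing the application of Lemma \ref{Duta lemma} with Lemma \ref{Duta lemma sobo}. The main observation that makes this work is that for $u\in W^{1,p}(\Omega)$ we also have $|u|\in W^{1,p}(\Omega)$ with $|D|u||=|Du|$ almost everywhere, so Lemma \ref{Duta lemma sobo} applies to $|u|$ and produces the right-hand side that we want.

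First I would enumerate the rationals $\{t_j\}$ of $(0,1)$ and set $u_j=|u|_{t_j}^\alpha$. Lemma \ref{Duta lemma sobo} (applied to $|u|$) gives, for each $j$, the pointwise bound
\[
|Du_j(x)|\le 2\M_{\alpha,\Omega}|Du|(x)+\alpha\,\M_{\alpha-1,\Omega}u(x)
\]
for almost every $x\in\Omega$. Next I would define $v_k(x)=\max_{1\le j\le k}u_j(x)$. The sequence $\{v_k\}$ is increasing and converges pointwise to $\M_{\alpha,\Omega}u$, and its weak gradient satisfies $|Dv_k|\le\max_{j\le k}|Du_j|$ almost everywhere, which inherits the same pointwise upper bound as above.

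The next step is to show that $\{|Dv_k|\}$ is bounded in $L^q(\Omega)$. Using \eqref{bdd in Lp rn} with exponent $\alpha$ one obtains $\M_{\alpha,\Omega}|Du|\in L^{p^*}(\Omega)$ with $p^*=np/(n-\alpha p)$, and since $q<p^*$ and $|\Omega|<\infty$, H\"older's inequality gives $\M_{\alpha,\Omega}|Du|\in L^q(\Omega)$. Likewise \eqref{bdd in Lp rn} with exponent $\alpha-1$ gives $\M_{\alpha-1,\Omega}u\in L^q(\Omega)$ directly. Combining, $\|Dv_k\|_{L^q(\Omega)}\le C(n,p,\alpha,|\Omega|)\|u\|_{W^{1,p}(\Omega)}$ uniformly in $k$. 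I would also verify $\M_{\alpha,\Omega}u\in L^q(\Omega)$ by the same H\"older/$L^{p^*}$ argument, so that $\M_{\alpha,\Omega}u\in W^{1,q}(\Omega)$ once the weak gradient is identified.

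Finally, pass a subsequence $\{Dv_{k_\ell}\}$ to a weak limit in $L^q(\Omega)$; since $v_k\to\M_{\alpha,\Omega}u$ pointwise (hence, via the uniform $L^q$-bound and dominated convergence or a standard distributional argument, also in the sense of distributions), the weak limit is the weak gradient $D\M_{\alpha,\Omega}u$. Applying the elementary weak-convergence fact recalled at the end of the proof of Lemma \ref{Duta lemma} (if $f_k\le g_k$ a.e.\ and both converge weakly in $L^q$, then $f\le g$ a.e.) to the uniform bound on $|Dv_{k_\ell}|$ yields the desired pointwise inequality. The only real subtlety is the bookkeeping that turns $L^{p^*}$-bounds into $L^q$-bounds via $|\Omega|<\infty$; everything else is a direct transcription of the proof of Theorem \ref{DMaOu theorem}.
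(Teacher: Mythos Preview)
Your proposal is correct and follows exactly the approach indicated in the paper: run the proof of Theorem \ref{DMaOu theorem} with Lemma \ref{Duta lemma sobo} in place of Lemma \ref{Duta lemma}, and use the H\"older argument from Corollary \ref{bounded sobolev} (via $q<p^*$ and $|\Omega|<\infty$) to obtain the $L^q$ bounds. Your observation that Lemma \ref{Duta lemma sobo} must be applied to $|u|$, using $|D|u||=|Du|$ almost everywhere, is precisely the small bookkeeping step required.
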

The proof is analogous to the proofs of Theorem \ref{DMaOu theorem} and Corollary \ref{bounded sobolev},
but using Lemma \ref{Duta lemma sobo} instead of Lemma \ref{Duta lemma}.

\begin{remark}
If $\Omega$ is bounded with a $C^1$-boundary, then Theorem \ref{DMaOu theorem sobo}
holds with a better exponent
$p^*=np/(n-\alpha p)$ instead of $q$. Indeed, in this setting we have the Sobolev inequality
\[
\|u\|_{L^r(\Omega)} \le C \|u\|_{W^{1,p}(\Omega)},
\]
where $r=np/(n-p)$ is the Sobolev conjugate of $p$, and we can estimate
\begin{align*}
\|D\M_{\alpha,\Omega}u\|_{L^{p^*}(\Omega)} &\le 2\|\M_{\alpha,\Omega}|Du|\|_{L^{p^*}(\Omega)}+\alpha\|\M_{\alpha-1,\Omega}u\|_{L^{p^*}(\Omega)} \\
&\le C\|Du\|_{L^p(\Omega)}+C\|u\|_{L^r(\Omega)} \\
&\le C\|Du\|_{L^p(\Omega)}+C\|u\|_{W^{1,p}(\Omega)} \\
&\le C\|u\|_{W^{1,p}(\Omega)}.
\end{align*}
In the second inequality, we used \eqref{bdd in Lp rn}
and the fact that $p^*$ can be written as
$p^*=nr/(n-(\alpha-1)r)$.
\end{remark}

\section{Examples}\label{sec: examples}
\medskip

Our first example shows that the inequality 
\begin{equation} \label{CMa-1Oux}
|D \mathcal M_{\alpha,\Omega}u(x)| \le C \mathcal M_{\alpha-1,\Omega}u(x),
\end{equation}
for almost every $x \in \Omega$, cannot hold in general. 
Hence, the term containing the spherical maximal function in \eqref{DMaOu} cannot be dismissed. 

\begin{example}\label{Jannen esim}

Let $\Omega = B(0,1) \subset \rn$. Let $1<p<\infty$ and let $0<\beta<1$. Then the function $u$, 
\[
u(x)=(1-|x|)^{-\beta/p},
\]
belongs to $L^p(\Omega) \cap L^1(\Omega)$. When $0<|x|<\rho$, $\rho$ small enough, the maximizing radius for the maximal functions
$\mathcal M_{\alpha,\Omega}u(x)$ and $\mathcal M_{\alpha-1,\Omega}u(x)$, %and $\mathcal S_{\alpha-1,\Omega}u(x)$,
$\alpha \ge 1$, is clearly the largest possible, i.e. $1-|x|$. 
Thus, by \eqref{Duta average} in the proof of Lemma \ref{Duta lemma},
\begin{equation*}
\begin{split}
D\mathcal M_{\alpha,\Omega}u(x)
=\:&(n-\alpha)\frac{x}{|x|}(1-|x|)^{\alpha-1}
\vint{B(x,1-|x|)}u(y)\,dy \\
&+n(1-|x|)^{\alpha-1}
\vint{\partial B(x,1-|x|)}u(y)\nu(y)\,d\mathcal H^{n-1}(y) \\
&-n\frac{x}{|x|}(1-|x|)^{\alpha-1}
\vint{\partial B(x,1-|x|)}u(y)\,d\mathcal H^{n-1}(y).
\end{split}
\end{equation*}
By symmetry, the contribution from the integral in the second term has the same direction $\frac{x}{|x|}$ as the first term,
whereas the direction of the last term is the opposite. Thus, all the terms lie in the same line of $\rn$
and it is sufficient to compare the vector norm of the first term and the sum of the latter terms. For the first term,
\[
\bigg|(n-\alpha)\frac{x}{|x|}(1-|x|)^{\alpha-1} \vint{B(x,1-|x|)}u(y)\,dy\bigg|
= (n-\alpha) \mathcal M_{\alpha-1,\Omega}u(x) \le M,
\]
where $M$ depends only on $n$, $p$, $\alpha$, $\beta$ and $\rho$. For the latter terms,
\[
\bigg|\vint{\partial B(x,1-|x|)}u(y)\big(\nu(y)-\frac{x}{|x|}\big)\,d\mathcal H^{n-1}(y)\bigg|
\ge \frac1{2} \vint{S(x)} u(y)\,d\mathcal H^{n-1}(y),
\]
where $S(x)$ is the half sphere $S(x)=\{y \in \partial B(x,1-|x|) : (y-x) \cdot x < 0\}$.
Further, when $|x|<\eps$,
\[
n(1-|x|)^{\alpha-1}\frac1{2} \vint{S(x)} u(y)\,d\mathcal H^{n-1}(y) 
\geq \frac{n(1-\eps)^{\alpha-1}}{2(2\eps)^{\beta/p}},
\]
which goes to $\infty$ as $\eps \to 0$. We conlude that for small values of $|x|$,
the boundary integral terms dominate, and thus \eqref{CMa-1Oux} cannot hold.
\end{example}

The next  example shows that Theorem \ref{DMaOu theorem sobo} is sharp.
There are domains $\Omega\subset\rn$, $n\ge 2$, for which 
$\M_{\alpha,\Omega}(W^{1,p}(\Omega))\not\subset W^{1,r}(\Omega)$ when $r>q=np/(n-(\alpha-1)p)$.
%the embedding
%$\M_{\alpha,\Omega}\colon W^{1,p}(\Omega)\hookrightarrow \dot W^{1,q}(\Omega),$
%where $q=np/(n-(\alpha-1)p)$, is optimal in the sense that $q$ cannot be replaced by any $r>q$.
This is in strict contrast with the global case, where 
$\M_{\alpha}\colon W^{1,p}(\rn)\hookrightarrow W^{1,p^*}(\rn)$ with $p^*=np/(n-\alpha p)$, see \cite[Theorem 2.1]{KS}. 

\begin{example}\label{example sobo}
Let 
\[
\Omega=\operatorname{int}\Big(\bigcup_{k=1}^\infty B_k\cup C_k\Big),
\] 
where 
\[
B_k=[k,k+2^{-k}]\times [0,2^{-k}]^{n-1}
\quad\text{and}\quad 
C_k=[k+2^{-k},k+1]\times [0,2^{-3k}]^{n-1}
\] 
is a corridor connecting $B_k$ to $B_{k+1}$. 
It suffices to show that for every $p'>p,$ there exists $u\in W^{1,p}(\Omega)$ such that 
\[
|D\M_{\alpha,\Omega}u|\not\in L^{np'/(n-(\alpha-1)p')}(\Omega).
\] 
Let $p'>p$. Define $u$ such that $u=2^{kn/p'}$ on $B_k$ and $u$ increases linearly from $2^{kn/p'}$
to $2^{(k+1)n/p'}$ on $C_k$. Then it is easy to see that $u\in W^{1,p}(\Omega)$.
%\[
%\int_{\Omega}|u|^p\le C\sum_{k=1}^\infty 2^{kn(p/q-1)}<\infty
%\]

If $x\in\frac12 B_k$, where $\frac12 B_k$ is a cube with the same center as $B_k$ and with side length half side length of $B_k$,
we have that 
\[
\M_{\alpha,\Omega} u(x)=\dist(x,\rn\setminus B_k)^\alpha 2^{kn/p'}.
\]
Hence, for almost every $x\in\frac12 B_k$,
\[
|D\M_{\alpha,\Omega} u(x)|
= \alpha \dist(x,\rn\setminus B_k)^{\alpha-1}2^{kn/p'}
\ge C2^{-k(\alpha-1-n/p')}, 
\]
which implies that
\[
\int_{\Omega}|D\M_{\alpha,\Omega} u(x)|^{np'/(n-(\alpha-1)p')}\,dx
\ge C\sum_{k=1}^\infty\int_{\frac12 B_k}2^{nk}\,dx=\infty.
\]
\end{example}

%The first example shows that  the embedding 
%$\M_{\alpha,\Omega}\colon L^p(\Omega)\hookrightarrow \dot W^{1,r}(\Omega)$ does not hold for any %$r>p$. 
%Consequently, the pointwise inequality $|D \M_{\alpha,\Omega}u(x)|\le C\M_{\alpha-1,\Omega}u(x)$ does %not hold in the local case.

%\begin{example}\label{example n=1}
%Let $n=1$ and  $\Omega=\,]0,2[\subset\re$ and  $\alpha>1$.
%Then for each function $u\colon\Omega\to[0,\infty)$ and for every $0<x<1,$ 
%\[
%\M_{\alpha,\Omega} u(x)=\frac12 x^{\alpha-1}\int_0^{2x}u(t)dt,
%\]
%and hence
%\[
%D \M_{\alpha,\Omega} u(x)
%=\frac12(\alpha-1)x^{\alpha-2}\int_0^{2x}u(t)dt \ + \ x^{\alpha-1}u(2x).
%\]
%Thus, for $\frac12<x<1$, $D \M_{\alpha,\Omega} u(x)\ge cu(2x)$, which implies the claim.
%which shows that if $u\notin L^r(\Omega)$ with $r>p$, then the derivative of the local maximal function of %$u$ is not in $L^r(\Omega)$ either. 
%\end{example}

%Define
%\[
%\tilde \M_{\alpha,\Omega}u(x)
%=\sup_{Q(x,r)\subset\Omega}\, r^{\alpha} \vint{Q(x,r)}|u(y)|\,dy,
%\]
%where $Q(x,r)=\,]x_1-r,x_1+r[\,\times\dots\times\, ]x_n-r,x_n+r[$. 

%The following example shows that the embedding 
%$\tilde\M_{\alpha,\Omega}\colon L^p(\Omega)\hookrightarrow \dot W^{1,r}(\Omega)$ does not hold for %any $r>p$. 

\bigskip
Define the local fractional maximal function over cubes by setting
\[
\widetilde \M_{\alpha,\Omega}u(x)
=\sup_{Q(x,r)\subset\Omega}\, r^{\alpha} \vint{Q(x,r)}|u(y)|\,dy,
\]
where $Q(x,r)=(x_1-r,x_1+r)\times\dots\times(x_n-r,x_n+r)$ is a cube with center $x=(x_1,\dots,x_n)$ and of side length $2r$.
As noted in \cite{KS}, in the global case the maximal operator over cubes behaves similarly as the maximal operator over balls.
Somewhat surprisingly, in the local case, the smoothing properties of the maximal operator over cubes are much worse.
Indeed, we show that there are domains $\Omega\subset\rn$ such that 
$\M_{\alpha,\Omega}(L^{p}(\Omega))\not\subset W^{1,p'}(\Omega)$ when $p'>p$.

\begin{example}
Let $\Omega=(0,2)\times(-1,2)^{n-1}$ and let $u\colon\Omega\to\mathbb{R}$ be of the form $u(x)=v(x_1),$ where $v$ is non-negative and continuous. If $Q(x,r)\subset\Omega,$ then
\[
r^{\alpha} \vint{Q(x,r)}|u(y)|\,dy=\frac12 r^{\alpha-1}\int_{x_1-r}^{x_1+r}v(t)\,dt.
\]
Hence, for $\alpha> 1$ and $x\in(0,1)^n$, we have
\[
\widetilde \M_{\alpha,\Omega}u(x)=\frac12 x_1^{\alpha-1}\int_0^{2x_1}v(t)\,dt
\]
and
\[
D_1 \widetilde\M_{\alpha,\Omega} u(x)
=\frac12(\alpha-1)x_1^{\alpha-2}\int_0^{2x_1}v(t)dt \ + \ x_1^{\alpha-1}v(2x_1).
\]
It follows that
\[
D_1 \widetilde\M_{\alpha,\Omega} u(x)\ge Cv(2x_1),
\]
for $x\in(1/2,1)\times(0,1)^{n-1},$ which shows that $D_1 \widetilde\M_{\alpha,\Omega} u$ cannot belong to a higher $L^p$ space than $u$.
\end{example}

%\begin{remark}
%In the case $n=1$, operators $\widetilde\M_{\alpha,\Omega}$ and $\M_{\alpha,\Omega}$ coincide.
%\end{remark}

In all our results in Section \ref{sec:rn}, we assumed that $\alpha\ge 1$. Our final example shows that, in the case $0<\alpha<1$,  
$\M_{\alpha,\Omega}u$ can be very irregular, even when $u$ is constant function. Indeed, we show that for any $r>0$, there exists a domain $\Omega$ such that the gradient of the fractional maximal function of a constant function does not belong to $L^r(\Omega)$.

%an embedding of type 
%$\M_{\alpha,\Omega}: L^p(\Omega)\hookrightarrow \dot W^{1,r}(\Omega)$ does not generally hold for any %$r>0$. 

\begin{example}
Let $n\ge 1$, $0<\alpha<1$ and $r>0$. 
We will construct a bounded open set $\Omega\subset \mathbb{R}^n$ such that, for $u\equiv 1$, we have 
\[
\M_{\alpha,\Omega} u=\dist(\cdot,\rn\setminus\Omega)^\alpha%\not\in \dot W^{1,r}(\Omega).
\]
and the gradient of $\M_{\alpha,\Omega}u$ does not belong to $L^{r}(\Omega)$. 
Let $\beta$ be an integer satisfying $\beta\ge n/((1-\alpha)r)$, and let 
\[
\Omega=B(0,2)\setminus \overline{\bigcup_{k\ge 1} S_k},
\]
where
\[
S_k=\{2^{-k}+j2^{-(1+\beta) k}:j=1,\dots,2^{\beta k}\}^n.
\] 
If $x\in S_k$ and $y\in S_l$ with $x\neq y$, then the balls $B(x,2^{-(1+\beta)k-1})$ and $B(y,2^{-(1+\beta)l-1})$ are disjoint. 
For each $y\in B(x,2^{-(1+\beta)k-1})\setminus\{x\}$, we have $\M_{\alpha,\Omega} u(y)=|y-x|^\alpha$, which implies that 
\[
|D \M_{\alpha,\Omega} u(y)|
=\alpha |y-x|^{\alpha-1}\ge C 2^{-(1+\beta)(\alpha-1)k}.
\] 
It follows that
\[
\begin{split}
\int_\Omega |D \M_{\alpha,\Omega} u(y)|^r\,dy
&\ge \sum_{k\ge 1}\sum_{x\in S_k}
\int_{B(x,2^{-(1+\beta)k-1})}|D \M_{\alpha,\Omega} u(y)|^r\,dy\\
&\ge C\sum_{k\ge 1}2^{\beta k n}2^{-(1+\beta)kn}2^{-(1+\beta)(\alpha-1)rk}\\
&= C\sum_{k\ge 1}2^{((1+\beta)(1-\alpha)r-n)k}=\infty,
\end{split}
\]
and hence the gradient of $\M_{\alpha,\Omega}u$ does not belong to $L^{r}(\Omega)$.
\end{example}

\section{The local discrete fractional maximal function in metric space}\label{sec: metric}
In this section, we study the smoothing properties of the local discrete fractional maximal function in a metric space which is equipped with a doubling measure.
We begin by recalling some definitions.
 
\subsection{Sobolev spaces on metric spaces}
Let $X=(X, d,\mu)$ be a metric measure space equipped with a metric $d$ and a Borel regular, doubling outer
measure $\mu$. The doubling property means that there is a fixed constant
$c_d>0$, called a doubling constant of $\mu$, such that 
\[
\mu(B(x,2r))\le c_d\mu(B(x,r))
\]
for each ball $B(x,r)=\{y\in X: d(y,x)<r\}$. 
We also assume that open sets have positive and bounded sets finite measure.
We say that the measure $\mu$ satisfies a measure lower bound condition if there exist constants $Q\ge 1$ and $c_{l}>0$ such that
\begin{equation}\label{measure lower bound}
 \mu(B(x,r))\ge c_{l}r^{Q}
\end{equation}
for all $x\in X$ and $r>0$. This assumption is needed for the boundedness of the fractional maximal operator in $L^{p}$.

General metric spaces lack the notion of smooth functions, but there exists a natural counterpart of Sobolev spaces, defined by 
Shanmugalingam in \cite{Sh1} and based on upper gradients.
A Borel function $g\ge0$ is an upper gradient of a function $u$ on an open set $\Omega\subset X$, if for all curves
$\gamma$ joining points $x$ and $y$ in $\Omega$,
\begin{equation}\label{ug}
|u(x)-u(y)|\le\int_{\gamma}g\,ds,
\end{equation}
whenever both $u(x)$ and $u(y)$ are finite, and $\int_{\gamma}g\,ds=\infty$ otherwise. 
By a curve, we mean a nonconstant, rectifiable, continuous mapping from a compact interval to $X$. 

If $g\ge0$ is a measurable function and \eqref{ug} only fails for a curve family with zero $p$-modulus, then $g$ is a $p$-weak upper gradient of $u$ on $\Omega$. For the $p$-modulus on metric measure spaces and the properties of upper gradients, see for example \cite{BB}, \cite{Hj2}, \cite{HK}, \cite{Sh1}, and \cite{Sh2}.
%We use the notation $g_u$ for a $p$-weak upper gradient of $u$. 
If $1\le p<\infty$ and $u\in L^{p}(\Omega)$, let
\[
\|u\|_{N^{1,p}(\Omega)} 
= \bigg(\int_\Omega |u|^p\,d\mu 
+\inf_g\int_\Omega g^p \,d\mu\bigg)^{1/p},
\]
where the infimum is taken over all $p$-weak upper gradients of $u$.
The Sobolev space on $\Omega$ is the quotient space
\[
N^{1,p}(\Omega)=\{u:\|u\|_{N^{1,p}(\Omega)}<\infty\}/{\sim},
\]
where  $u \sim v$ if and only if $\|u-v\|_{N^{1,p}(\Omega)}=0$.

For a measurable set $E\subset X$, the Sobolev space with zero boundary values is 
\[
N^{1,p}_0(E)=\bigl\{u|_E:u\in N^{1,p}(X)\text{ and }u=0
                \text{ in }X\setminus E\bigr\}.
\]
By \cite[Theorem 4.4]{Sh2}, also the space $N^{1,p}_0(E)$, equipped with the norm inherited from $N^{1,p}(X)$, is a Banach space. 
Note that we obtain the same class of functions as above if we require $u$ to vanish $p$-quasi everywhere in $X\setminus E$ in the sense of $p$-capacity, since Sobolev functions are defined pointwise outside sets of zero capacity, see \cite{Sh1} and \cite{BBS}.

In Theorems \ref{hardy sobo nolla} and \ref{sobo nolla}, we assume, in addition to the doubling condition, that $X$ supports a (weak) $(1,p)$-Poincar\'e inequality, which means that there exist constants $c_P>0$ and $\lambda\ge1$ such that for all balls $B$, all locally integrable functions $u$ and for all $p$-weak upper gradients $g_u$ of $u$, we have 
\[
\vint{B}|u-u_B|\, d\mu
\le c_Pr\bigg(\vint{\lambda B}g_u^p\,d\mu\bigg)^{1/p},
\]
where 
\[
 u_B=\vint{B}u\,d\mu={\mu(B)}^{-1}\int_Bu\,d\mu
\]
is the integral average of $u$ over $B$.

In the Euclidean space with the Lebesgue measure, $N^{1,p}(\Omega)=W^{1,p}(\Omega)$ for all domains $\Omega\subset\rn$ and
$g_u=|Du|$ is a minimal upper gradient of $u$, see \cite{Sh1} and \cite{Sh2}.
Standard examples of doubling metric spaces supporting Poincar\'e inequalities include (weighted) Euclidean spaces, compact Riemannian manifolds, metric graphs, and Carnot-Carath\'eodory spaces. See for instance \cite{HjK} and \cite{Hj2}, and the references therein, for more extensive lists of examples and applications.

The following Hardy-type condition for functions in Sobolev spaces with zero boundary values has been proved in \cite{A} and in \cite{KKM}.
\begin{theorem}\label{hardy sobo nolla}
Assume that $X$ supports a $(1,p)$-Poincar\'e inequality with $1<p<\infty$. 
Let $\Omega\subset X$ be an open set. If $u\in N^{1,p}(\Omega)$ and 
\[
\int_{\Omega}\bigg(
\frac{u(x)}{\dist(x,X\setminus\Omega)}\bigg)^{p}\,d\mu(x)<\infty,
\]
then $u\in N^{1,p}_{0}(\Omega)$.
\end{theorem}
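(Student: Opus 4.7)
My plan is a truncation-and-limit argument. Set $d(x) = \dist(x, X \setminus \Omega)$ and, for each $k \in \n$, define the Lipschitz cutoff
\[
\phi_k(x) = \min\bigl\{1,\ \max\{0,\ kd(x) - 1\}\bigr\},
\]
so that $\phi_k \equiv 0$ on $\{d \le 1/k\}$, $\phi_k \equiv 1$ on $\{d \ge 2/k\}$, and $g_{\phi_k} = k\ch{\{1/k < d < 2/k\}}$ is an upper gradient of $\phi_k$ on $X$. Put $u_k = \phi_k u$. I aim to show $u_k \in N^{1,p}_0(\Omega)$ for every $k$ and $u_k \to u$ in $N^{1,p}(\Omega)$; closedness of $N^{1,p}_0(\Omega)$ in $N^{1,p}(X)$ then forces $u \in N^{1,p}_0(\Omega)$.

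To place $u_k$ in $N^{1,p}_0(\Omega)$, I would apply the Leibniz rule for upper gradients to obtain
\[
g_{u_k} = \phi_k g_u + k|u|\ch{\{1/k < d < 2/k\}}
\]
as a $p$-weak upper gradient of $u_k$ on $\Omega$. On the support of the second term $k \le 1/d$, so this expression is dominated by $g_u + |u|/d$, which lies in $L^p(\Omega)$ by the Hardy assumption. Since $u_k$ vanishes on the open neighborhood $\{d < 1/k\}$ of $X \setminus \Omega$, the zero extensions of $u_k$ and $g_{u_k}$ form a function--gradient pair in $N^{1,p}(X)$, so $u_k \in N^{1,p}_0(\Omega)$ by definition.

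For the convergence step, write $u - u_k = (1 - \phi_k)u$; the Leibniz rule produces the upper gradient $(1 - \phi_k)g_u + k|u|\ch{\{1/k < d < 2/k\}}$. The first summand tends to $0$ in $L^p(\Omega)$ by dominated convergence, with pointwise limit $0$ on $\Omega$ and majorant $g_u$. The second is bounded above by $(|u|/d)\ch{\{d < 2/k\}}$, dominated by the Hardy-integrable function $|u|/d \in L^p(\Omega)$ and tending pointwise to $0$, so it too vanishes in $L^p(\Omega)$. The $L^p$-norm convergence $\|u - u_k\|_{L^p(\Omega)} \to 0$ follows by dominated convergence with majorant $|u|$. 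Hence $u_k \to u$ in $N^{1,p}(\Omega)$, and the proof is complete.

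The main delicate point is the upper-gradient calculus: the argument rests on the Leibniz rule for $p$-weak upper gradients and on the fact that extending by zero a function which vanishes on an open neighborhood of $X \setminus \Omega$ yields a genuine Newtonian function on $X$. Both are standard in the theory of Newtonian spaces, and the $(1,p)$-Poincar\'e hypothesis enters mainly through the background functional-analytic properties of $N^{1,p}$ --- notably the closedness of $N^{1,p}_0(\Omega)$ inside $N^{1,p}(X)$ that is invoked at the last step.
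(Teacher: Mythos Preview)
The paper does not supply its own proof of this theorem; it is quoted as a known result and attributed to \cite{A} and \cite{KKM}. So there is no in-paper proof to compare against.

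Your argument is the standard truncation-by-distance proof and is essentially what appears in the cited sources: cut off by a Lipschitz function of $\dist(\cdot,X\setminus\Omega)$, use the Leibniz rule for $p$-weak upper gradients, control the cross term $k|u|\ch{\{1/k<d<2/k\}}$ by $|u|/d\in L^p(\Omega)$ via the Hardy hypothesis, observe that $u_k$ extends by zero to a genuine element of $N^{1,p}(X)$ because it vanishes on an open neighborhood of $X\setminus\Omega$, and pass to the limit using dominated convergence and completeness of $N^{1,p}_0(\Omega)$. This is correct.

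One small inaccuracy in your closing paragraph: the closedness of $N^{1,p}_0(\Omega)$ in $N^{1,p}(X)$ does not require the Poincar\'e inequality --- it follows simply from $N^{1,p}_0(\Omega)$ being a Banach space under the inherited norm (the paper itself cites \cite[Theorem~4.4]{Sh2} for this). In fact, the truncation argument you wrote goes through in any doubling metric measure space without Poincar\'e; the Poincar\'e hypothesis in the statement is carried over from the cited references, where it is part of the standing assumptions, rather than being essential to this particular implication.
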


\subsection{The fractional maximal function}

Let $\Omega\subset X$ be an open set such that $X\setminus\Omega\ne\emptyset$ and let $\alpha\ge 0$. 
The local fractional maximal function of a locally integrable function $u$ is 
\[
\M_{\alpha,\Omega} u(x)=\sup\,r^{\alpha}\vint{B(x,r)}|u|\,d\mu,
\]
where the supremum is taken over all radii $r$ satisfying $0<r<\dist(x,X\setminus\Omega)$.
If $\alpha=0$, we have the local Hardy-Littlewood maximal function 
\[
\M_{\Omega} u(x)=\sup\vint{B(x,r)}|u|\,d\mu. 
\]
When $\Omega=X$, the supremum is taken over all $r>0$ and we obtain the fractional maximal function 
$\M_{\alpha}u$ and the Hardy-Littlewood maximal function $\M u$.

Sobolev type theorem for the fractional maximal operator follows easily from  the Hardy-Littlewood maximal function theorem. 
For the proof, see \cite{EKM}, \cite{GGKK} or \cite{HKNT}.

\begin{theorem}\label{fracM bounded}
Assume that measure lower bound condition \eqref{measure lower bound} holds. 
If $p>1$ and $0<\alpha<Q/p$, then there is a constant $C>0$, independent of $u$,
such that
 \[
 \|\M_{\alpha}u\|_{L^{p^*}(X)}
 \le C\|u\|_{L^p(X)},
 \]
for every $u\in L^{p}(X)$ with $p^*=Qp/(Q-\alpha p)$. 
If $p=1$ and $0<\alpha<Q$, then 
 \[
 \mu(\{\M_{\alpha}u>\lambda\})
 \le C\big(\lambda^{-1}\|u\|_{L^{1}(X)}\big)^{Q/(Q-\alpha)}
 \]
for every $u\in L^{1}(X)$. The constant $C>0$ depends only on the doubling constant, the constant in the measure lower bound and $\alpha$. 
\end{theorem}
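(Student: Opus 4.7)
The approach I would take is the classical Hedberg-type splitting argument, transplanted to the metric setting using only the doubling property, the measure lower bound \eqref{measure lower bound}, and the (strong/weak) boundedness of the Hardy--Littlewood maximal operator $\M$. The whole proof hinges on deducing the pointwise inequality
\begin{equation*}
\M_{\alpha}u(x)\le C\,\|u\|_{L^{p}(X)}^{\alpha p/Q}\bigl(\M u(x)\bigr)^{1-\alpha p/Q},
\end{equation*}
from which both the strong-type ($p>1$) and weak-type ($p=1$) statements follow by elementary manipulations.

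\textbf{Step 1: pointwise estimate.} For a fixed $x\in X$ and a threshold $r_{0}>0$ to be chosen, I would split the supremum defining $\M_{\alpha}u(x)$ according to $r\le r_{0}$ and $r>r_{0}$. In the first regime the bound
\begin{equation*}
r^{\alpha}\vint{B(x,r)}|u|\,d\mu\le r_{0}^{\alpha}\M u(x)
\end{equation*}
is immediate. In the second regime I would apply H\"older's inequality to obtain
\begin{equation*}
\vint{B(x,r)}|u|\,d\mu\le\mu(B(x,r))^{-1/p}\|u\|_{L^{p}(X)},
\end{equation*}
and then use the measure lower bound $\mu(B(x,r))\ge c_{l}r^{Q}$ to conclude
\begin{equation*}
r^{\alpha}\vint{B(x,r)}|u|\,d\mu\le c_{l}^{-1/p}\,r^{\alpha-Q/p}\|u\|_{L^{p}(X)}\le c_{l}^{-1/p}\,r_{0}^{\alpha-Q/p}\|u\|_{L^{p}(X)},
\end{equation*}
where the last inequality uses $\alpha<Q/p$, so the exponent $\alpha-Q/p$ is negative and the function $r\mapsto r^{\alpha-Q/p}$ is decreasing. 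Balancing the two bounds by choosing $r_{0}^{Q/p}\sim\|u\|_{L^{p}(X)}/\M u(x)$ yields the Hedberg estimate stated above. (If $\M u(x)=0$ then $u=0$ $\mu$-a.e.\ and there is nothing to prove.)

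\textbf{Step 2: strong-type inequality ($p>1$).} Raising the Hedberg estimate to the power $p^{*}=Qp/(Q-\alpha p)$ and using the algebraic identity $p^{*}(1-\alpha p/Q)=p$, I would obtain
\begin{equation*}
\M_{\alpha}u(x)^{p^{*}}\le C\,\|u\|_{L^{p}(X)}^{p^{*}-p}\bigl(\M u(x)\bigr)^{p}.
\end{equation*}
Integrating over $X$ and invoking the strong $(p,p)$ boundedness of the Hardy--Littlewood maximal operator on the doubling metric measure space $X$ (a Vitali covering argument gives the weak $(1,1)$ bound, and Marcinkiewicz interpolation with the trivial $L^{\infty}$ bound yields the strong $(p,p)$ bound for all $p>1$) gives $\|\M_{\alpha}u\|_{L^{p^{*}}(X)}^{p^{*}}\le C\|u\|_{L^{p}(X)}^{p^{*}}$, which is the claim.

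\textbf{Step 3: weak-type inequality ($p=1$).} With $p=1$ the Hedberg estimate reads $\M_{\alpha}u(x)\le C\|u\|_{L^{1}(X)}^{\alpha/Q}(\M u(x))^{1-\alpha/Q}$, so $\{\M_{\alpha}u>\lambda\}$ is contained in the level set $\{\M u>t\}$ with $t=(\lambda/(C\|u\|_{L^{1}(X)}^{\alpha/Q}))^{Q/(Q-\alpha)}$. Applying the weak $(1,1)$ bound $\mu(\{\M u>t\})\le C\|u\|_{L^{1}(X)}/t$ and simplifying exponents produces the desired estimate $\mu(\{\M_{\alpha}u>\lambda\})\le C(\lambda^{-1}\|u\|_{L^{1}(X)})^{Q/(Q-\alpha)}$. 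No real obstacle arises; the only subtlety is that one must use the doubling constant rather than a dimension in the Vitali covering lemma underlying the weak $(1,1)$ bound for $\M$, so the constants at the end depend only on $c_{d}$, $c_{l}$ and $\alpha$ as claimed.
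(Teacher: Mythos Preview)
Your argument is correct and is precisely the standard Hedberg--Adams deduction from the Hardy--Littlewood maximal theorem that the paper has in mind: the paper does not actually prove this theorem but simply states that it ``follows easily from the Hardy--Littlewood maximal function theorem'' and refers to \cite{EKM}, \cite{GGKK}, \cite{HKNT}. Your pointwise splitting, the balancing choice of $r_{0}$, and the subsequent use of the strong/weak bounds for $\M$ are exactly the computations those references carry out, so there is nothing to add.
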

Now the corresponding boundedness results for the local fractional maximal function follow easily because for each open set 
$\Omega\subset X$ and for each $u\in L^{p}(\Omega)$, $p>1$, we have 
 \begin{equation}\label{bdd in Lp}
 \|\M_{\alpha,\Omega}u\|_{L^{p^*}(\Omega)}
 \le\|\M_{\alpha}(u\ch{\Omega})\|_{L^{p^*}(X)}
  \le C\|u\ch{\Omega}\|_{L^p(X)}
 = C\|u\|_{L^p(\Omega)}.
 \end{equation}
 Similarly, we obtain a weak type estimate when $p=1$,
\begin{equation}\label{weak type in L1}
 \mu(\{x\in\Omega:\M_{\alpha,\Omega}u(x)>\lambda\})
 \le C\big(\lambda^{-1}\|u\|_{L^{1}(\Omega)}\big)^{Q/(Q-\alpha)}.
\end{equation}
The weak type estimate implies that the fractional maximal operator maps $L^{1}$ locally to $L^{s}$ whenever $1<s<Q/(Q-\alpha)$.

\begin{corollary}\label{fractional in L1}
Assume that measure lower bound condition \eqref{measure lower bound} holds. 
Let $0<\alpha<Q$ and $1\le s<Q/(Q-\alpha)$. 
If $\Omega\subset X$, $\mu(\Omega)<\infty$ and $u\in L^{1}(\Omega)$,
then $\M_{\alpha,\Omega}u\in L^{s}(\Omega)$ and
\begin{equation}\label{Ls norm}
\|\M_{\alpha,\Omega}u\|_{ L^{s}(\Omega)}
\le C\|u\|_{L^{1}(\Omega)},
\end{equation}
where the constant $C$ depends on the doubling constant, the constant in the measure lower bound, $s$, $\alpha$ and $\mu(\Omega)$. 
\end{corollary}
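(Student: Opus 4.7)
The plan is to derive the strong-type $L^1 \to L^s$ bound directly from the weak-type estimate \eqref{weak type in L1} by a standard Marcinkiewicz-type layer-cake argument, exploiting the finite measure of $\Omega$ to absorb the small-value regime of $\M_{\alpha,\Omega}u$. Set $q = Q/(Q-\alpha) > 1$; the assumption $s < q$ will be precisely what makes the tail integral converge.

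First, I would apply Cavalieri's principle to write
\[
\int_\Omega (\M_{\alpha,\Omega} u)^s \, d\mu = s \int_0^\infty \lambda^{s-1} \mu\bigl(\{x \in \Omega : \M_{\alpha,\Omega} u(x) > \lambda\}\bigr) \, d\lambda
\]
and split the outer integral at a threshold $\lambda_0 > 0$ to be chosen later. For $\lambda \le \lambda_0$, I would use the trivial bound $\mu(\{\M_{\alpha,\Omega}u > \lambda\}) \le \mu(\Omega)$, producing a contribution of order $\mu(\Omega)\lambda_0^s/s$. For $\lambda > \lambda_0$, I would invoke \eqref{weak type in L1} to bound the integrand, obtaining a tail contribution of the form $C\|u\|_{L^1(\Omega)}^q \int_{\lambda_0}^\infty \lambda^{s-1-q}\,d\lambda$, which converges exactly because $s < q$ and equals a constant multiple of $\|u\|_{L^1(\Omega)}^q \lambda_0^{s-q}/(q-s)$.

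The natural balancing choice is $\lambda_0 = \|u\|_{L^1(\Omega)} \mu(\Omega)^{-1/q}$, which makes both contributions proportional to $\mu(\Omega)^{1-s/q}\|u\|_{L^1(\Omega)}^s$ and yields \eqref{Ls norm} with a constant depending on the doubling constant, the constant in the measure lower bound, $s$, $\alpha$, and $\mu(\Omega)$. There is no real obstacle beyond this routine splitting; all the genuine analytic content sits in the weak-type bound \eqref{weak type in L1}, which in turn rests on Theorem \ref{fracM bounded} and the restriction argument leading to \eqref{bdd in Lp}. The only point requiring mild care is the verification that the exponent $1/s - 1/q$ of $\mu(\Omega)$ is non-negative, so that the dependence on $\mu(\Omega)$ is genuine and harmless under the stated hypothesis $\mu(\Omega) < \infty$.
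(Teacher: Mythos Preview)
Your proposal is correct and follows essentially the same route as the paper: a layer-cake decomposition of $\|\M_{\alpha,\Omega}u\|_{L^s(\Omega)}^s$, split at a threshold, with the trivial bound $\mu(\Omega)$ below and the weak-type estimate \eqref{weak type in L1} above. The only cosmetic difference is that the paper chooses the cut $a=\|u\|_{L^1(\Omega)}$ rather than your optimized $\lambda_0=\|u\|_{L^1(\Omega)}\mu(\Omega)^{-1/q}$; both choices yield \eqref{Ls norm} with a constant depending on $\mu(\Omega)$.
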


\begin{proof}
Let $a>0$. Now
\begin{align*}
  \int_{\Omega}(\M_{\alpha,\Omega}u)^{s}\,d\mu
& =s\int_{0}^{\infty}t^{s-1}\mu(\{x\in\Omega:\M_{\alpha,\Omega}u(x)>t\})\,dt\\
&=s\bigg(\int_{0}^{a}+\int_{a}^{\infty}\bigg),
\end{align*}
where 
\[
\int_{0}^{a}t^{s-1}\mu(\{x\in\Omega:\M_{\alpha,\Omega}u(x)>t\})\,dt
\le a^{s}\mu(\Omega).
\]
For the second term, \eqref{weak type in L1} together with the assumption 
$1\le s<Q/(Q-\alpha)$ implies that
\begin{align*}
\int_{a}^{\infty}t^{s-1}\mu(\{x\in\Omega:\M_{\alpha,\Omega}u(x)>t\})\,dt
&\le C\|u\|_{L^{1}(\Omega)}^{Q/(Q-\alpha)}
\int_{a}^{\infty}t^{s-1-Q/(Q-\alpha)}\,dt\\
&=C\|u\|_{L^{1}(\Omega)}^{Q/(Q-\alpha)} a^{s-Q/(Q-\alpha)}.
\end{align*}
Now norm estimate \eqref{Ls norm} follows by choosing $a=\|u\|_{L^{1}(\Omega)}$.
\end{proof}

\subsection{The discrete fractional maximal function}
We begin the construction of the local discrete fractional maximal function in the metric setting
with a Whitney covering as in \cite[Lemma 4.1]{AK}, see also the classical references \cite{CW} and \cite{MS1}.
Let $\Omega\subset X$ be an open set such that $X\setminus\Omega\ne\emptyset$, 
let $0\le\alpha\le Q$ and let $0<t<1$ be a scaling parameter. 
There exist balls $B_{i}=B(x_i,r_{i})$, $i=1,2,\dots$, with $r_{i}=\tfrac1{18}t\dist(x_{i},X\setminus\Omega)$, for which
\[
\Omega=\bigcup_{i=1}^\infty B_{i}
\quad\text{and}\quad
\sum_{i=1}^\infty \ch{6B_{i}}(x)\le N<\infty
\]
for all $x\in\Omega$. The constant $N$ depends only on the doubling constant. Moreover, 
 for all $x\in 6B_{i}$,
\begin{equation}\label{xn etaisyys}
12r_{i}\le t\dist(x,X\setminus\Omega)\le 24r_{i}.
\end{equation}
Using the definition of $r_{i}$, it is easy to show that if $x\in B_{i}$ and $B_{i}\cap 6B_{j}\ne\emptyset$, then 
\begin{equation}\label{ri rj}
r_{i}\le \frac{24}{17}r_{j}\le \frac{3}{2}r_{j}\quad\text{and}\quad
 r_{j}\le \frac{19}{12}r_{i}\le \frac{5}{3}r_{i}.
\end{equation}
Related to the Whitney covering $\{B_{i}\}_i$, 
there is a sequence of Lipschitz functions $\{\ph_{i}\}_i$, called partition of unity, for which
\[
\sum_{i=1}^\infty\ph_i(x)=1
\]
for all $x\in\Omega$. Moreover, for each $i$, the functions $\ph_i$ satisfy the following properties: $0\le\ph_i\le1$, 
$\ph_i=0$ in $X\setminus 6B_{i}$, $\ph_i\ge\nu$ in $3B_{i}$, 
$\ph_i$ is Lipschitz with constant $L/r_i$ where $\nu>0$ and $L>0$ depend only on the doubling constant.

Now the discrete fractional convolution of a locally integrable function $u$ at the scale $t$ is 
$u_{t}^{\alpha}$,
\[
u_{t}^{\alpha}(x)=\sum_{i=1}^{\infty}\ph_{i}(x)r_{i}^{\alpha}{}u_{3B_{i}},\quad x\in X.
\]
Let $t_{j}$, $j=1,2,\dots$ be an enumeration of the positive rationals of the interval $(0,1)$. 
For every scale $t_{j}$, choose a covering of $\Omega$ and a partition of unity as above.
The local discrete fractional maximal function of $u$ in $\Omega$ is $\M^{*}_{\alpha,\Omega}u$,
\[
\M^{*}_{\alpha,\Omega}u(x)=\sup_{j}|u|_{t_j}^{\alpha}(x),\quad x\in X.
\] 
For $\alpha=0$, we obtain the local discrete maximal function studied in \cite{AK}.
The construction depends on the choice of the coverings, but the estimates
below are independent of them.

The local discrete fractional maximal function is comparable to the standard local fractional maximal function. 
The proof of the following lemma is similar as for local discrete maximal function and local Hardy-Littlewood maximal function in \cite[Lemma 4.2]{AK}.

\begin{lemma}\label{M M* comparable}
There is a constant $C\ge 1$, depending only on the doubling constant of $\mu$, such that 
 \[
C^{-1}\M_{\alpha,\Omega}^{24}u(x)
 \le \M^{*}_{\alpha,\Omega}u(x)
 \le C\M_{\alpha,\Omega}u(x)
 \]
for every $x\in X$ and for each locally integrable function $u$.
\end{lemma}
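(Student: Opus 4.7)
The strategy is to exploit the Whitney covering structure behind $u_t^\alpha$, handling the two inequalities separately; the argument is a direct adaptation to the fractional setting of the corresponding reasoning in \cite[Lemma 4.2]{AK}.

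For the upper bound, I fix a scale $t_j$ and examine a single term $\varphi_i(x) r_i^\alpha |u|_{3B_i}$ in the sum defining $|u|_{t_j}^\alpha(x)$. Whenever $\varphi_i(x) \neq 0$ one has $x \in 6B_i$, and \eqref{xn etaisyys} forces $r_i < \dist(x, X \setminus \Omega)/12$. In particular $9r_i < \dist(x, X \setminus \Omega)$, so $B(x, 9r_i)$ is admissible for $\M_{\alpha,\Omega}u(x)$. The inclusions $3B_i \subset B(x,9r_i) \subset 15B_i$ combined with doubling yield $\mu(B(x,9r_i)) \le C\mu(3B_i)$, hence
\[
r_i^\alpha |u|_{3B_i} \le C(9r_i)^\alpha \vint{B(x,9r_i)} |u|\,d\mu \le C\M_{\alpha,\Omega}u(x).
\]
Summing over the at most $N$ nonzero terms (bounded overlap) and then taking the supremum over $j$ gives $\M^*_{\alpha,\Omega}u(x) \le C\M_{\alpha,\Omega}u(x)$.

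For the lower bound, given any $r$ with $24r < \dist(x,X\setminus\Omega)$, I select by density of the rationals in $(0,1)$ a scale $t_j$ satisfying $24r \le t_j \dist(x,X\setminus\Omega) \le 48r$; the interval $(24r/\dist(x,X\setminus\Omega), \min\{1, 48r/\dist(x,X\setminus\Omega)\})$ is nonempty precisely because $24r < \dist(x,X\setminus\Omega)$. At this scale some covering ball $B_i$ contains $x$, and \eqref{xn etaisyys} then forces $r \le r_i \le 4r$. From $d(x,x_i) < r_i$ and $r \le r_i$ one gets $B(x,r) \subset 3B_i$, while $3B_i \subset B(x,4r_i) \subset B(x,16r)$ together with doubling yields $\mu(3B_i) \le C\mu(B(x,r))$. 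Since $x \in 3B_i$ one has $\varphi_i(x) \ge \nu > 0$, so
\[
|u|_{t_j}^\alpha(x) \ge \varphi_i(x) r_i^\alpha |u|_{3B_i} \ge C^{-1} r^\alpha \vint{B(x,r)} |u|\,d\mu.
\]
Taking the supremum over admissible $r$ completes the argument.

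The only genuinely delicate point is the scale-selection step in the lower bound: the constant $24$ appearing in $\M_{\alpha,\Omega}^{24}$ is exactly what is needed to guarantee a rational $t_j < 1$ producing a Whitney ball $B_i$ whose radius $r_i$ is comparable to $r$ and for which $B(x,r) \subset 3B_i$. Everything else reduces to routine use of the doubling property and of the pointwise bounds on the partition of unity recorded before the statement.
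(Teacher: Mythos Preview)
Your proof is correct and follows essentially the same approach as the paper, which simply refers to the analogous non-fractional argument in \cite[Lemma~4.2]{AK}; the only differences are harmless choices of constants (you pick $t_j$ with $24r\le t_j\dist(x,X\setminus\Omega)\le 48r$ whereas the paper's commented-out proof uses $21r\le t\dist(x,X\setminus\Omega)\le 24r$, leading to $r\le r_i\le 4r$ instead of $r\le 2r_i$ and $r_i\le 2r$).
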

Above, $24$ is the constant from \eqref{xn etaisyys} and 
\[
\M_{\alpha,\Omega}^{\beta} u(x)=\sup\,r^{\alpha}\vint{B(x,r)}|u|\,d\mu,
\]
where the supremum is taken over all radii $r$ for which $0<\beta r<\dist(x,X\setminus\Omega)$, is the restricted local fractional maximal function.

Since the discrete and the standard fractional maximal functions are comparable, the integrability estimates hold for the local discrete fractional maximal function as well, see Theorem \ref{fracM bounded} and \eqref{bdd in Lp}.

\subsection{Sobolev boundary values}
In the metric setting, smoothing properties of the discrete fractional maximal operator in the global case have been studied in \cite{HKNT} and of the standard fractional maximal operator $\M_{\alpha}$ in \cite{HLNT}.
In the local case, by \cite[Theorem 5.6]{AK}, the local discrete maximal operator preserves the boundary values in the Newtonian sense, that is, $|u|-\M^{*}_{\Omega}u\in N^{1,p}_{0}(\Omega)$ whenever $u\in N^{1,p}(\Omega)$.
Intuitively, the definition of the fractional maximal function says that it has to be small near the boundary. In Theorem \ref{sobo nolla}, we will show that if $\Omega$ has finite measure, then the local discrete fractional maximal operator maps $L^{p}(\Omega)$-functions to Sobolev functions with zero boundary values.

The next theorem, a local version of \cite[Theorem 6.1]{HKNT}, shows that the local discrete fractional maximal function of an 
$L^{p}$-function has a weak upper gradient and both $\M^{*}_{\alpha,\Omega}u$ 
and the weak upper gradient belong to a higher Lebesgue space than $u$. 

We use the following simple fact in the proof:
Assume that $u_i$, $i=1,2,\dots$, are functions and 
$g_i$, $i=1,2,\dots$, are $p$-weak upper gradients of $u_i$, respectively. 
Let $u=\sup_i u_i$ and $g=\sup_i g_i$.
If $u$ is finite almost everywhere, then $g$ is a $p$-weak upper gradient 
of $u$. For the proof, we refer to \cite{BB}.

 \begin{theorem}\label{Lp Sobo}
Assume that measure lower bound condition \eqref{measure lower bound} holds. 
Let $\Omega\subset X$ be an open set and let $u\in L^{p}(\Omega)$ with $1<p<Q$.
Let $1\le\alpha<Q/p$,  $p^{*}=Qp/(Q-\alpha p)$ and $q=Qp/(Q-(\alpha-1) p)$.
Then C$\M_{\alpha-1,\Omega}u$ is a weak upper gradient of  $\M^{*}_{\alpha,\Omega}u$.
Moreover, 
\[
\|\M^{*}_{\alpha,\Omega}u\|_{L^{p^{*}}(\Omega)}
\le C\|u\|_{L^p(\Omega)}
\quad\text{and}\quad
\|\M_{\alpha-1,\Omega}u\|_{L^q(\Omega)}
\le C \|u\|_{L^p(\Omega)}.
\]
The constants $C>0$ depend only on the doubling constant, the constant in the measure lower bound, 
$p$ and $\alpha$.
\end{theorem}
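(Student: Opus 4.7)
The plan is to derive the two $L^r$-bounds from known boundedness results and to build the upper gradient through a pointwise Lipschitz estimate at each rational scale. For the first inequality, Lemma \ref{M M* comparable} reduces $\|\M^*_{\alpha,\Omega}u\|_{L^{p^*}(\Omega)}$ to $\|\M_{\alpha,\Omega}u\|_{L^{p^*}(\Omega)}$, and \eqref{bdd in Lp} then provides the bound in terms of $\|u\|_{L^p(\Omega)}$. For the second inequality, the hypotheses give $0\le\alpha-1<Q/p$, so \eqref{bdd in Lp} applied with the shifted fractional parameter $\alpha-1$ (or the ordinary Hardy--Littlewood maximal theorem when $\alpha=1$) produces exactly $q=Qp/(Q-(\alpha-1)p)$ as the target exponent.

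The heart of the argument is the pointwise claim
\[
\operatorname{Lip} v_j(x)\le C\M_{\alpha-1,\Omega}u(x), \qquad v_j:=|u|_{t_j}^\alpha,
\]
where $C$ depends only on the doubling constant and $\alpha$. I fix $x\in B_{i_0}$ from the Whitney covering at scale $t_j$. For $y$ sufficiently close to $x$ the partition of unity identity $\sum_i\ph_i\equiv 1$ lets me write
\[
v_j(y)-v_j(x)=\sum_i(\ph_i(y)-\ph_i(x))\bigl(r_i^\alpha|u|_{3B_i}-r_{i_0}^\alpha|u|_{3B_{i_0}}\bigr),
\]
where only finitely many indices contribute and all satisfy $r_i\sim r_{i_0}$ by \eqref{ri rj}. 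For each such $i$, both $3B_i$ and $3B_{i_0}$ sit inside a common ball $B'=B(x,cr_{i_0})$ with $B'\subset\Omega$, the inclusion being uniform in $t_j<1$ via \eqref{xn etaisyys}. The doubling property bounds each bracketed term by $Cr_{i_0}^\alpha\vint_{B'}|u|\,d\mu$; combined with $\operatorname{Lip}\ph_i\le L/r_i$ and the bounded overlap of $\{6B_i\}$, this gives $\operatorname{Lip} v_j(x)\le Cr_{i_0}^{\alpha-1}\vint_{B'}|u|\,d\mu\le C\M_{\alpha-1,\Omega}u(x)$, since the radius of $B'$ is admissible for $\M_{\alpha-1,\Omega}u(x)$.

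Because $v_j$ is locally Lipschitz on $\Omega$, the function $\operatorname{Lip} v_j$ is an upper gradient of $v_j$, so $C\M_{\alpha-1,\Omega}u$ is an upper gradient of every $v_j$. Since $\M^*_{\alpha,\Omega}u=\sup_j v_j$ is finite $\mu$-almost everywhere by the $L^{p^*}$-bound just established, the supremum-of-upper-gradients fact recalled before the statement of the theorem (and proved in \cite{BB}) shows that $C\M_{\alpha-1,\Omega}u$ is a $p$-weak upper gradient of $\M^*_{\alpha,\Omega}u$. I expect the pointwise Lipschitz estimate to be the main technical step: in particular, verifying that $B'$ remains strictly inside $\Omega$ with a radius admissible for $\M_{\alpha-1,\Omega}u(x)$ requires a careful use of \eqref{xn etaisyys} together with $t_j<1$, while the identification of the maximal function on the right-hand side rests on the comparability \eqref{ri rj} of Whitney radii. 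The remaining steps follow standard templates from the global fractional case in \cite{HKNT} and the non-fractional local case in \cite{AK}.
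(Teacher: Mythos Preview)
Your proposal is correct and follows essentially the same route as the paper: establish that $C\M_{\alpha-1,\Omega}u$ is an upper gradient of each $|u|^\alpha_{t_j}$, pass to the supremum via the fact recalled just before the theorem, and read off the norm bounds from Lemma~\ref{M M* comparable} and \eqref{bdd in Lp}. The only difference is cosmetic: the paper bypasses the partition-of-unity subtraction trick and simply observes that $g_t=L\sum_j r_j^{\alpha-1}|u|_{3B_j}\ch{6B_j}$ is already an upper gradient of $|u|^\alpha_t$ (from the Lipschitz constant of each $\ph_j$), then bounds this sum pointwise by $C\M_{\alpha-1,\Omega}u$ using \eqref{ri rj} and bounded overlap, exactly as in your final step.
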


\begin{proof}
We begin by showing that $C\M_{\alpha-1,\Omega}u$ is a weak upper gradient of $|u|^{\alpha}_{t}$. 
Let $t\in(0,1)\cap\mathbb Q$ be a scale and let $\{B_{i}\}_i$ be a Whitney covering of $\Omega$. 
Since
 \[
 |u|^{\alpha}_{t}(x)=\sum_{j=1}^{\infty} \ph_{j}(x)r_{j}^{\alpha}|u|_{3B_{j}},
 \]
each $\ph_{j}$ is $L/r_{j}$-Lipschitz continuous and has a support in $6B_{j}$, the function
\[
g_{t}(x)=L\sum_{j=1}^{\infty}r_{j}^{\alpha-1}|u|_{3B_{j}}\ch{6B_{j}}(x)
\]
is a weak upper gradient of $ |u|^{\alpha}_{t}$. We want to find an upper bound for $g_{t}$. 
Let $x\in\Omega$ and let $i$ be such that $x\in B_{i}$. 
Then, by \eqref{ri rj}, 
$3B_{j}\subset B(x,4r_{i})\subset 15B_{j}$ whenever $B_{i}\cap 6B_{j}\ne\emptyset$ and hence
\[
|u|_{3B_{j}}\le C\vint{B(x,4r_{i})}|u|\,d\mu.
\] 
The bounded overlap property of the balls $6B_{j}$ together with estimate \eqref{ri rj} implies that
\[ 
g_{t}(x)\le Cr_{i}^{\alpha-1}\vint{B(x,4r_{i})}|u|\,d\mu
\le C\M_{\alpha-1,\Omega}u(x).
\]
Consequently, $C\M_{\alpha-1,\Omega}u$ is a weak upper gradient of $ |u|^{\alpha}_{t}$. 

By \eqref{bdd in Lp}, the function $\M^{*}_{\alpha,\Omega}u$ belongs to $L^{p^{*}}(\Omega)$ and hence it is finite almost everywhere.
As 
 \[
 \M^{*}_{\alpha,\Omega}u(x)=\sup_{j} |u|^{\alpha}_{t_{j}}(x),
 \]
and because $C\M_{\alpha-1,\Omega}u$ is an upper gradient of $|u|_{t_j}^\alpha$ for every $t=1,2,\dots$,
we conclude that it is an upper gradient of $\M^*_{\alpha,\Omega} u$ as well.
The norm bounds follow from Lemma \ref{M M* comparable} and \eqref{bdd in Lp}.
\end{proof}

\begin{remark}\label{local sobo}
 With the assumptions of Theorem \ref{Lp Sobo}, 
 $\M^{*}_{\alpha,\Omega}u\in N^{1,q}_{\text{loc}}(\Omega)$ and 
 \[
 \|\M^{*}_{\alpha,\Omega}u\|_{N^{1,q}(A)}
 \le C\mu(A)^{1/q-1/p^{*}}\|u\|_{L^p(A)}
 \]
for all open sets $A\subset\Omega$ with $\mu(A)<\infty$.
\end{remark}

\begin{remark}\label{L1 sobo}
Similar arguments as in the proof of Theorem \ref{Lp Sobo} together with  Corollary \ref{fractional in L1} show that if the measure lower bound condition holds, $\Omega\subset X$ is an open set, $u\in L^{1}(\Omega)$, $\mu(\Omega)<\infty$,  
and $1\le s'\le s<Q/(Q-(\alpha-1))$, then $C \M_{\alpha-1,\Omega}u$ is a weak upper gradient of  
$\M^{*}_{\alpha,\Omega}u$ and 
 \[
 \|\M^{*}_{\alpha,\Omega}u\|_{L^{s}(\Omega)}
 \le C\|u\|_{L^1(\Omega)}
 \quad\text{and}\quad
 \|\M_{\alpha-1,\Omega}u\|_{L^{s'}(\Omega)}
 \le C \|u\|_{L^1(\Omega)}.
 \]
In particular, we have that 
 $\M^{*}_{\alpha,\Omega}u\in N^{1,s'}(\Omega)$ and 
 \[
 \|\M^{*}_{\alpha,\Omega}u\|_{N^{1,s'}(\Omega)}
 \le C\mu(\Omega)^{1/s'-1/s}\|u\|_{L^1(\Omega)}.
 \]
 \end{remark}
%Note that Example \ref{example sobo} shows that the result above is optimal in the sense that $q$ cannot be replaced by any %bigger exponent as in the global case, proved in Euclidean case in \cite[Theorem 2.1]{KS} and in the metric setting in \cite[Theorem %4.5]{HLNT}.

The next result shows that the local discrete fractional maximal operator actually maps $L^{p}(\Omega)$ to the Sobolev space with zero boundary values.  
\begin{theorem}\label{sobo nolla}
Assume that measure lower bound condition \eqref{measure lower bound} holds and that 
$X$ supports a $(1,p)$-Poincar\'e inequality with $1<p<Q$.  
Let $\Omega\subset X$ be an
open set with $\mu(\Omega)<\infty$ and let $u\in L^{p}(\Omega)$.
Let $1\le\alpha<Q/p$ and  $q=Qp/(Q-(\alpha-1) p)$.
Then $\M^{*}_{\alpha,\Omega}u\in N^{1,q}_{0}(\Omega)$.
\end{theorem}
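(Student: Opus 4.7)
The plan is to adapt the Euclidean proof of Corollary \ref{sobo nolla rn} to the metric setting. First I would establish that $\M^{*}_{\alpha,\Omega}u \in N^{1,q}(\Omega)$, and then apply the Hardy-type result of Theorem \ref{hardy sobo nolla} with exponent $q$ to upgrade this to the Sobolev space with zero boundary values.

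For the first step, Theorem \ref{Lp Sobo} provides a weak upper gradient $C\M_{\alpha-1,\Omega}u$ of $\M^{*}_{\alpha,\Omega}u$, together with the norm bounds $\|\M^{*}_{\alpha,\Omega}u\|_{L^{p^{*}}(\Omega)} \le C\|u\|_{L^{p}(\Omega)}$ and $\|\M_{\alpha-1,\Omega}u\|_{L^{q}(\Omega)} \le C\|u\|_{L^{p}(\Omega)}$, where $p^{*}=Qp/(Q-\alpha p)$. Since $q \le p^{*}$ (because $(\alpha-1)p \le \alpha p$) and $\mu(\Omega)<\infty$, H\"older's inequality upgrades the $L^{p^{*}}$-bound to $\M^{*}_{\alpha,\Omega}u \in L^{q}(\Omega)$, so $\M^{*}_{\alpha,\Omega}u \in N^{1,q}(\Omega)$.

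The second step is to verify the Hardy integrability
\[
\int_{\Omega}\bigg(\frac{\M^{*}_{\alpha,\Omega}u(x)}{\dist(x, X\setminus\Omega)}\bigg)^{q}\,d\mu(x) < \infty.
\]
The key observation is the pointwise bound $\M_{\alpha,\Omega}u(x) \le \dist(x, X\setminus\Omega)\,\M_{\alpha-1,\Omega}u(x)$, obtained by writing $r^{\alpha}=r\cdot r^{\alpha-1}$ and using $r<\dist(x, X\setminus\Omega)$ inside the supremum defining $\M_{\alpha,\Omega}u$. Combined with Lemma \ref{M M* comparable}, this gives $\M^{*}_{\alpha,\Omega}u(x) \le C\dist(x, X\setminus\Omega)\,\M_{\alpha-1,\Omega}u(x)$, so the Hardy integral is controlled by $C\|\M_{\alpha-1,\Omega}u\|_{L^{q}(\Omega)}^{q}$, which is finite by Theorem \ref{Lp Sobo}.

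Finally, since $q \ge p$, the assumed $(1,p)$-Poincar\'e inequality on $X$ yields a $(1,q)$-Poincar\'e inequality via H\"older. Applying Theorem \ref{hardy sobo nolla} with exponent $q$ to $\M^{*}_{\alpha,\Omega}u$ produces the desired conclusion $\M^{*}_{\alpha,\Omega}u \in N^{1,q}_{0}(\Omega)$. I do not anticipate a substantive obstacle: the argument is essentially a metric-space translation of the Euclidean Corollary \ref{sobo nolla rn}, with Lemma \ref{M M* comparable} serving as the bridge between the discrete operator appearing in the conclusion and the standard local fractional maximal function that enters the Hardy estimate.
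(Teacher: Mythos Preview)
Your proposal is correct and follows essentially the same route as the paper: establish $N^{1,q}(\Omega)$ membership via Theorem \ref{Lp Sobo} (this is exactly Remark \ref{local sobo}), verify the Hardy integrability condition through the pointwise bound $\M^{*}_{\alpha,\Omega}u(x)\le C\dist(x,X\setminus\Omega)\,\M_{\alpha-1,\Omega}u(x)$, and then invoke Theorem \ref{hardy sobo nolla} at exponent $q$. The only minor difference is in how that pointwise bound is obtained: the paper re-runs the Whitney analysis on each $|u|_t^\alpha$, whereas you reach it more directly via Lemma \ref{M M* comparable} together with the elementary inequality $\M_{\alpha,\Omega}u\le\dist(\cdot,X\setminus\Omega)\,\M_{\alpha-1,\Omega}u$; your explicit remark that the $(1,p)$-Poincar\'e inequality implies $(1,q)$-Poincar\'e (needed to apply Theorem \ref{hardy sobo nolla} with exponent $q$) is a point the paper leaves implicit.
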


\begin{proof}
 Let $u\in L^{p}(\Omega)$. 
 By Remark \ref{local sobo}, $\M^{*}_{\alpha,\Omega}u\in N^{1,q}(\Omega)$ and hence, 
 by Theorem \ref{hardy sobo nolla}, it suffices to show that 
 \begin{equation}\label{hardy}
 \int_{\Omega}\bigg(
 \frac{\M^{*}_{\alpha,\Omega}u(x)}{\dist(x,X\setminus\Omega)}\bigg)^{q}\,d\mu(x)<\infty.
\end{equation}
 We begin by considering $|u|^{\alpha}_{t}$.
 Let $t\in(0,1)\cap\mathbb Q$ be a scale and let $\{B_{i}\}_i$ be a Whitney covering of $\Omega$. 
 Let $x\in\Omega$ and let $i$ be such that $x\in B_{i}$. 
Now
 \[
 |u|^{\alpha}_{t}(x)=\sum_{j} \ph_{j}(x)r_{j}^{\alpha}|u|_{3B_{j}},
 \]
where the sum is over such indices $j$ for which $B_{i}\cap 6B_{j}\ne\emptyset$. 
As in the proof of Theorem \ref{Lp Sobo}, we use \eqref{ri rj}, the doubling property, 
the bounded overlap of the balls $B_{j}$ and \eqref{xn etaisyys} to obtain that 
\[
|u|_{3B_{j}}\le C\vint{B(x,4r_{i})}|u|\,d\mu 
\] 
for all such $j$, and that
\[
 |u|^{\alpha}_{t}(x)
 \le Cr_{i}^{\alpha}\vint{B(x,4r_{i})}|u|\,d\mu
 \le C\dist(x,X\setminus\Omega)\M_{\alpha-1,\Omega}u(x).
 \]
 By taking the supremum on the left side we have
 \[
 \M^{*}_{\alpha,\Omega}u(x)
 \le C\dist(x,X\setminus\Omega)\M_{\alpha-1,\Omega}u(x).
 \]
This together with \eqref{bdd in Lp} implies that
\[
\int_{\Omega}\bigg(
\frac{\M^{*}_{\alpha,\Omega}u(x)}{\dist(x,X\setminus\Omega)}\bigg)^{q}\,d\mu(x)
\le C\int_{\Omega}\big(\M_{\alpha-1,\Omega}u\big)^{q}\,d\mu
\le C\|u\|_{L^{p}(\Omega)}^{q}.
\]
Hence \eqref{hardy} holds and the claim follows.
\end{proof}

\begin{remark}
The same proof using Remark \ref{L1 sobo} and norm estimate \eqref{Ls norm} gives a corresponding result for $p=1$. 
Namely, if $u\in L^{1}(\Omega)$, $\mu(\Omega)<\infty$,   $1<\alpha<Q$, and 
$1<s'<Q/(Q-(\alpha-1))$, then $\M^{*}_{\alpha,\Omega}u\in N^{1,s'}_{0}(\Omega)$.
\end{remark}

\vspace{0.5cm}
\noindent
\small{\textsc{Toni Heikkinen},}
\small{\textsc{Department of Mathematics},}
\small{\textsc{P.O. Box 11100},}
\small{\textsc{FI-00076 Aalto University},}
\small{\textsc{Finland}}\\
\footnotesize{\texttt{toni.heikkinen@aalto.fi}}

\vspace{0.3cm}
\noindent
\small{\textsc{Juha Kinnunen},}
\small{\textsc{Department of Mathematics},}
\small{\textsc{P.O. Box 11100},}
\small{\textsc{FI-00076 Aalto University},}
\small{\textsc{Finland}}\\
\footnotesize{\texttt{juha.k.kinnunen@aalto.fi}}

\vspace{0.3cm}
\noindent
\small{\textsc{Janne Korvenp\"a\"a},}
\small{\textsc{Department of Mathematics},}
\small{\textsc{P.O. Box 11100},}
\small{\textsc{FI-00076 Aalto University},}
\small{\textsc{Finland}}\\
\footnotesize{\texttt{janne.korvenpaa@aalto.fi}}

\vspace{0.3cm}
\noindent
\small{\textsc{Heli Tuominen},}
\small{\textsc{Department of Mathematics and Statistics},}
\small{\textsc{P.O. Box 35},}
\small{\textsc{FI-40014 University of Jyv\"askyl\"a},}
\small{\textsc{Finland}}\\
\footnotesize{\texttt{heli.m.tuominen@jyu.fi}}


\begin{thebibliography}{000}
\bibitem{A}
D. Aalto, 
The discrete maximal function on a doubling metric space,  
Licentiate thesis (in Finnish), Helsinki University of Technology (2008).

\bibitem{AK}
D. Aalto and J. Kinnunen, 
The discrete maximal operator in metric spaces, 
J. Anal. Math. 111 (2010), 369--390.

\bibitem{A1}
D.R. Adams,  
A note on Riesz potentials,
Duke Math. J. 42 (1975), 765--778.

\bibitem{A2}
D.R.  Adams,
Lecture Notes on $L^p$-Potential Theory,
Dept. of Math., University of Ume\r a, 1981.

\bibitem{AH}
D.R. Adams and L.I. Hedberg,
Function Spaces and Potential Theory,
Springer-Verlag, Berlin Heidelberg, 1996.

\bibitem{AP} 
J.M.  Aldaz  and  J.  P\'erez  L\'azaro,  
Functions  of  bounded  variation,  the  derivative of  the  one  dimensional  maximal  function,  and  applications  to  inequalities,
Trans. Amer. Math. Soc. 359 (2007), 2443-2461.

\bibitem{BB}
A. Bj\"orn and J. Bj\"orn,
Nonlinear potential theory on metric spaces. European Mathematical Society,
Tracts in Mathematics 17, 2011.

\bibitem{BBS}
A. Bj\"orn, J. Bj\"orn, and N. Shanmugalingam,
Quasicontinuity of Newton--Sobolev functions and density of
Lipschitz functions in metric measure spaces, 
Houston J. Math. 34 (2008), no.~4, 1197--1211.

\bibitem{Bo}
J. Bourgain, 
Averages in the plane over convex curves and maximal operators, 
J. Anal. Math.  47  (1986), 69--85.

\bibitem{B}
S.M. Buckley, 
Is the maximal function of a Lipschitz function continuous?,
Ann. Acad. Sci. Fenn. Math. 24 (1999), 519--528.

\bibitem{CW}
R.R. Coifman and G. Weiss,
Analyse Harmonique Non-Commutative sur Certain Espaces Homog\`enes,
Lecture Notes in Mathematics No. 242. Springer-Verlag, 1971.

\bibitem{EKM}
D. Edmunds, V. Kokilashvili, and A. Meskhi, 
Bounded and Compact Integral Operators, 
Mathematics and its Applications, vol. 543, Kluwer Academic Publishers, Dordrecht, Boston, London, 2002.
%\bibitem{GV}
%A.E. Gatto and S. V\'agi,
%Fractional integrals on spaces of homogeneous type.
%Analysis and Partial Differential Equations, C. Sadosky (ed.), Dekker, 1990, 171--216.
%\bibitem{GSV}
%A.E. Gatto, C. Segovia and S. V\'agi, On fractional differentiation and integration on spaces of %homogeneous type. Rev. Mat. Iberoamericana  12  (1996),  no. 1, 111--145.

\bibitem{GGKK}
I. Genebashvili, A. Gogatishvili, V. Kokilashvili and M. Krbec,
Weight Theory for Integral Transforms on Spaces of Homogeneous Type,
Addison Wesley Longman Limited, 1998.

\bibitem{Go}
A. Gogatishvili,
Two-weight mixed inequalities in Orlicz classes for fractional maximal functions defined on homogeneous type spaces,
Proc. A. Razmadze Math. Inst. 112 (1997), 23--56.

\bibitem{Go2}
A. Gogatishvili,
Fractional maximal functions in weighted Banach function spaces,
Real Anal. Exchange 25 (1999/00), no. 1, 291--316.
%\bibitem{GPS}O. Gorosito, G. Pradolini, and O. Salinas,
%Boundedness of the fractional maximal operator on variable exponent Lebesgue spaces: a short proof,
% http://arxiv.org/abs/0908.1421

\bibitem{Hj2}
P. Haj\l asz, 
Sobolev spaces on metric-measure spaces,
In: ``Heat kernels and analysis on manifolds, graphs, and metric spaces'',
(Paris, 2002), 173--218, Contemp. Math. 338, Amer. Math. Soc.
Providence, RI, 2003.

\bibitem{HjK}
P. Haj\l asz and P. Koskela,
Sobolev met Poincar\'e,
Mem. Amer. Math. Soc. 145 (2000), no.688.

\bibitem{HM}   P. Haj\l asz and J. Maly,  On approximate differentiability of the maximal function, 
Proc. of AMS., 138 (2010), no. 1, 165--174.

\bibitem{HO}
P. Haj\l asz and J. Onninen, 
On boundedness of maximal functions in Sobolev spaces,
Ann. Acad. Sci. Fenn. Math. 29 (2004), no. 1, 167--176.

\bibitem{HKNT}
T. Heikkinen, J. Kinnunen, J. Nuutinen and H. Tuominen,
Mapping properties of the discrete fractional maximal operator in metric measure spaces,
Kyoto J. Math. 53 (2013), no. 3, 693--712.

\bibitem{HLNT}
T. Heikkinen, J. Lehrb\"ack, J. Nuutinen and H. Tuominen,
Fractional maximal functions in metric measure spaces,
Analysis and Geometry in Metric Spaces 1 (2012) 147--162. 

\bibitem{HT} 
T. Heikkinen and H. Tuominen,
Smoothing properties of the discrete fractional maximal operator on Besov and Triebel--Lizorkin spaces,
http://arxiv.org/abs/1301.4819

\bibitem{HK}
J. Heinonen and P. Koskela, 
Quasiconformal maps on metric spaces with controlled geometry,
Acta Math. 181 (1998), 1--61.

\bibitem{KKM}
T. Kilpel\"ainen, J. Kinnunen and O. Martio,
Sobolev spaces with zero boundary values on metric spaces, 
Potential Anal. 12 (2000), no. 3, 233--247.

\bibitem{K}  
J. Kinnunen, 
The Hardy-Littlewood maximal function of a Sobolev-function, Israel 
 J.Math. 100 (1997), 117-124. 

\bibitem{KL}
J. Kinnunen and P. Lindqvist,
The derivative of the maximal function,
J. Reine Angew. Math. 503 (1998), 161--167.

\bibitem{KiMa}
J. Kinnunen and O. Martio,
Hardy's inequalities for Sobolev functions,
Math. Res. Lett. 4 (1997), no. 4, 489--500. 

\bibitem{KS}
J. Kinnunen and E. Saksman,
Regularity of the fractional maximal function,
Bull. London Math. Soc. 35 (2003), no. 4, 529--535.

\bibitem{Ko3}
S. Korry, 
Boundedness of Hardy-Littlewood maximal operator in the framework of Lizorkin-Triebel spaces, 
Rev. Mat. Complut. 15 (2002), no. 2, 401--416. 

\bibitem{KrKu}
N. Kruglyak and E.A. Kuznetsov, 
Sharp integral estimates for the fractional maximal function and interpolation, 
Ark. Mat.  44 (2006), no. 2, 309--326. 

\bibitem{Ku}
O. Kurka, 
On the variation of the Hardy-Littlewood maximal function,
http://arxiv.org/abs/1210.0496

\bibitem{LMPT}
M.T. Lacey, K. Moen, C. P\'erez, and R.H. Torres,  
Sharp weighted bounds for fractional integral operators, 
J. Funct. Anal. 259 (2010), no. 5, 1073--1097.

\bibitem{L}
H. Luiro, 
Continuity of the Hardy-Littlewood maximal operator in Sobolev spaces,
Proc. of AMS., 135 (2007), no. 1, 243--251. 

\bibitem{L2}
H.  Luiro,  
On  the  regularity  of  the  Hardy-Littlewood  maximal  operator  on  subdomains of $\rn$,
 Proc. Edinb. Math. Soc. (2) 53 (2010), no. 1, 211--237. 

\bibitem{L3}
H. Luiro, 
On the differentiability of directionally differentiable functions and applications,
http://arxiv.org/abs/1208.3971

%\bibitem{KK}
%V.M. Kokilashvili and A. Kufner,
%Fractional integrals on spaces of homogeneous type
%Comm. Math. Univ. Carolinae 30 (1989), no. 3, 511--523.
\bibitem{MS1}
R.A. Mac\'\i as and C. Segovia,
A decomposition into atoms of distributions on spaces of homogeneous type,
Adv. in Math. 33 (1979), 271--309.
%\bibitem{MS2}
%R.A. Mac\'\i as and C. Segovia, 
%Lipschitz functions on spaces of homogeneous type, Adv. Math. 33 (1979), 
%257--270.

\bibitem{MW}
B. Muckenhoupt and R.L. Wheeden,
Weighted norm inequalities for fractional integrals,
Trans. Amer. Math. Soc. 192 (1974), 261--274.

\bibitem{PW}
C. P\'erez and R. Wheeden,
Potential operators, maximal functions, and generalizations of $A_\infty$,
Potential Anal. 19 (2003), no. 1, 1--33. 

\bibitem{SWZ} 
E.T.  Sawyer, R.L.  Wheeden, and S. Zhao,
Weighted norm inequalities for operators of potential type and fractional maximal functions,
Potential Anal. 5 (1996), no. 6, 523--580. 

\bibitem{S1}
W. Schlag, 
A generalization of Bourgain's circular maximal theorem 
J. Amer. Math. Soc. 10, no. 1 (1997), 103--122.

\bibitem{S2}
W. Schlag, 
$L^p$ to $L^q$ estimates for the circular maximal function, 
Ph.D. Thesis, California Institute of Technology, 1996.\\ 
Has also appeared as part of a book: Topics in Analysis and Applications, Selected Theses, World Scientific Press, 2000.

\bibitem{SS}
W. Schlag and C. Sogge, 
Local smoothing estimates related to the circular maximal theorem,
Math. Res. Lett. 4 (1997), no.1, 1--15.

\bibitem{Sh1}
N. Shanmugalingam,
Newtonian spaces: an extension of Sobolev spaces to metric measure spaces,
Rev. Mat. Iberoamericana 16 (2000), no. 2, 243--279.

\bibitem{Sh2} 
N. Shanmugalingam,
Harmonic functions on metric spaces.
Illinois J. Math. 45 (2001), 1021--1050.

\bibitem{So}
C. Sogge, 
Fourier integrals in classical analysis, 
Cambridge Tracts in Mathematics, 105. Cambridge University Press, Cambridge, 1993.

\bibitem{St}
E.M. Stein, 
Maximal functions. I. Spherical means,
Proc. Nat. Acad. Sci. U.S.A.  73  (1976), no. 7, 2174--2175.

\bibitem{T}
H. Tanaka, 
A remark on the derivative of the one-dimensional Hardy-Littlewood maximal
function. Bull. Austral. Math. Soc. 65 (2002), 253--258.


\bibitem{W}
R. L. Wheeden,
A characterization of some weighted norm inequalities for the fractional maximal function,
Studia Math. 107 (1993), 257--272.


\end{thebibliography}
\end{document}